\newcommand{\ot}{\otimes}
\DeclareMathOperator{\coh}{H}
\DeclareMathOperator{\Ext}{Ext}
\DeclareMathOperator{\HH}{HH}
\numberwithin{equation}{section}
\newtheorem{theo}[equation]{Theorem}
\newtheorem{lemma}[equation]{Lemma}
\title{Gerstenhaber bracket on Hopf algebra and Hochschild cohomologies}
\author{Tekin Karada\u{g}}
\newcommand{\Addresses}{{% additional braces for segregating \footnotesize
  \bigskip
  \footnotesize

  \textsc{Department of Mathematics, Texas A\&M University, 
College Station, Texas 77843, USA}\par\nopagebreak
  \textit{E-mail address}: \texttt{tekinkaradag@math.tamu.edu}
}}
\date{\today}
\begin{document}

\maketitle
\thispagestyle{empty}
%\clearpage

\begin{abstract}
We calculate the Gerstenhaber bracket on Hopf algebra and Hochschild cohomologies of the Taft algebra $T_p$ for any integer $p> 2$ which is a nonquasi-triangular Hopf algebra. We show that the bracket is indeed zero on Hopf algebra cohomology of $T_p$, as in all known quasi-triangular Hopf algebras. This example is the first known bracket computation for a nonquasi-triangular algebra. Also, we find a general formula for the bracket on Hopf algebra cohomology of any Hopf algebra with bijective antipode on the bar resolution that is reminiscent of Gerstenhaber's original formula for Hochschild cohomology.
\end{abstract}

\let\thefootnote\relax\footnote{{\em Key words and phrases: } Hochschild cohomology, Hopf algebra cohomology, Gerstenhaber bracket, Taft algebra}
\let\thefootnote\relax\footnote{Partially supported by NSF grant 1665286.}
\section{Introduction}\label{sec:introduction}

Gerstenhaber brackets were originally defined on Hochschild cohomology by M. Gerstenhaber himself \cite[Section 1.1]{G}. In 2002, A. Farinati and A. Solotar showed that for any Hopf algebra $A$, Hopf algebra cohomology H$^*(A):=\Ext^*_A(k,k)$ is a Gerstenhaber algebra \cite{FAR}. Hence, we can define a Gerstenhaber bracket on Hopf algebra cohomology. In the same year, R. Taillefer used a different approach and found a bracket on Hopf algebra cohomology \cite{Tail} which is equivalent to the bracket constructed by A. Farinati and A. Solotar. The category of $A$-modules and the category of $A^e$-modules are examples of strong exact monoidal categories. In 2016, Reiner Hermann \cite[Theorem 6.3.12, Corollary 6.3.15]{Hermann} proved that if the strong exact monoidal category is lax braided, then the bracket is constantly zero. Therefore, the Gerstenhaber bracket on the Hopf algebra cohomology of a quasi-triangular Hopf algebra is trivial. However, we do not know the bracket structure for a nonquasi-triangular Hopf algebra. Taft algebras are nice examples of nonquasi-triangular Hopf algebras. In this paper, we show that the Gerstenhaber bracket on the Hochschild cohomology of a Taft Algebra is nontrivial. However, the bracket structure on Hopf algebra cohomology of a Taft algebra is constantly zero. Also, we take the Gerstenhaber bracket formula on Hochschild comology and find a general formula for Gerstenhaber bracket on Hopf algebra cohomology.

We start by giving some basic definitions and some tools to calculate the bracket on Hochschild cohomology in Section 2. Then, we compute the Gerstenhaber bracket on the Hochschild cohomology of $A=k[x]/(x^p)$ where the field $k$ has characteristic 0 and the integer $p>2$ in Section 3. We use the technique introduced by C. Negron and S. Witherspoon \cite{NW} and note that they computed the bracket on Hochschild cohomology of $A$ for the case that $k$ has positive characteristic $p$ \cite[Section 5]{NW}. 

In Section 4, we compute the Gerstenhaber bracket for the Taft algebra $T_p$ which is a nonquasi-triangular Hopf algebra. We use a similar technique as in \cite{NW} to calculate the bracket on Hochschild cohomology of $T_p$. It is also known that the Hopf algebra cohomology of any Hopf algebra with a bijective antipode can be embedded in the Hochschild cohomology of the algebra \cite[Theorem 9.4.5 and Corollary 9.4.7]{HH}. Since all finite dimensional Hopf algebras (also most of known infinite dimensional Hopf algebras) have bijective antipode, we can embed the Hopf algebra cohomology of $T_p$ into the Hochschild cohomology of $T_p$. Then, we use this explicit embedding and find the bracket on the Hopf algebra cohomology of $T_p$. As a result of our calculation, we obtain that the bracket on Hopf algebra cohomology of $T_p$ is also trivial. 

In the last section, we derive a general expression for the bracket on Hopf algebra cohomology of any Hopf algebra $A$ with bijective antipode. We first consider a specific resolution that agrees with the bar resolution of $A$ and find a bracket formula for it. Then, we use the composition of various isomorphisms and an embedding from Hopf algebra cohomology into Hochschild cohomology in order to discover the bracket formula on Hopf algebra cohomology.

\section{Gerstenhaber Bracket on Hochschild Cohomology}

Let $k$ be a field, $A$ be a $k$-algebra, and $A^e=A\ot_k A^{op}$ where $A^{op}$ is the opposite algebra with reverse multiplication. For simplicity, we write $\ot$ instead of $\ot_k$. The following resolution $B(A)$ is a free resolution of the $A^e$-module $A$, called the \textit{bar resolution},
\begin{equation}\label{bar complex}
B(A):\cdots \stackrel{d_3}{\longrightarrow} A^{\otimes 4} \stackrel{d_2}{\longrightarrow} A^{\otimes 3} \stackrel{d_1}{\longrightarrow} A^{\otimes 2} \stackrel{\pi}{\longrightarrow} A \longrightarrow 0,
\end{equation}
where 
$$d_n(a_0\ot a_1\ot \cdots \ot a_{n+1})=\sum_{i=0}^{n}(-1)^ia_0\ot a_1\ot \cdots\ot a_ia_{i+1}\ot \cdots \ot a_{n+1}$$ 
and $\pi$ is multiplication.

Consider the following complex that is derived by applying Hom$_{A^e}(-,A)$ to the bar resolution $B(A)$
\begin{equation}\label{hom bar}
0 {\longrightarrow} \text{Hom}_{A^e}(A^{\otimes 2},A) \stackrel{d^{*}_1}{\longrightarrow} \text{Hom}_{A^e}(A^{\otimes 3},A) \stackrel{d^{*}_2}{\longrightarrow} \text{Hom}_{A^e}(A^{\otimes 4},A) \stackrel{d^{*}_3}{\longrightarrow} \cdots
\end{equation}
where $d_n^{*}(f)=fd_n$. The \textit{Hochshild cohomology} of the algebra $A$ is the cohomology of the cochain complex (\ref{bar complex}), i.e. $$\text{HH}^{*}(A,A)=\bigoplus_{n\geq 0}\Ext_{A^e}^n(A,A).$$

We also define the \textit{Hopf algebra cohomology} of the Hopf algebra $A$ over the field $k$ as $$\textup{H}^{*}(A,k)=\bigoplus_{n\geq 0}\textup{Ext}^n_A(k,k)$$ under the cup product.

Let $f\in \text{Hom}_k(A^{\otimes m},A)$ and $g\in \text{Hom}_k(A^{\otimes n},A)$. The Hochschild cohomology of $A$ is an algebra with the following cup product and the Gerstenhaber bracket structures. The \textit{cup product} $f\smile g\in$Hom$_k(A^{\otimes(m+n)},A)$ is defined by
$$(f\smile g)(a_1\otimes \cdots \otimes a_{m+n}):=(-1)^{mn}f(a_1\otimes \cdots \otimes a_m)g(a_{m+1}\otimes\cdots a_{m+n})$$
for all $a_1,\cdots, a_{m+n}\in A$, and the \textit{Gerstenhaber bracket} $[f,g]$ is an element of\\
Hom$_k(A^{\otimes(m+n-1)},A)$ given by
$$[f, g] := f\circ g-(-1)^{(m-1)(n-1)}g\circ f$$
where the circle product $f\circ g$ is
\begin{align*}
&(f\circ g)(a_1\otimes \cdots \otimes a_{m+n-1}):=\\
&\sum_{i=1}^{m}(-1)^{(n-1)(i-1)}f(a_1\otimes\cdots a_{i-1}\otimes g(a_i\otimes \cdots a_{i+n-1})\otimes a_{i+n}\otimes \cdots \otimes a_{m+n-1})
\end{align*}
for all $a_1,\cdots, a_{m+n-1}\in A$. We note that these definitions directly come from the bar resolution.

There is an identity between cup product and bracket \cite[Section 1]{G}:
\begin{equation}\label{cupbrac}
[f^*\smile g^*,h^*]=[f^*,h^*]\smile g^*+(-1)^{|f^*|(|h^*|-1)}f^*\smile [g^*,h^*],
\end{equation}
where $f^*, g^*,$ and $h^*$ are the images (in Hochschild cohomology) of the cocyles $f,g$, and $h$, respectively.

Computing the bracket on the bar resolution is not an ideal method. Instead, we can use another resolution, $\mathbb{A}\stackrel{\mu}\rightarrow A$,  satisfying the following hypotheses \cite[(3.1) and Lemma 3.4.1]{NW}:

(a) $\mathbb{A}$ admits an embedding $\iota:\mathbb{A}\to B(A)$  of complexes of $A$-bimodules for which
the following diagram commutes
\begin{center}
	\begin{tikzcd}[column sep=small]
		\mathbb{A} \arrow{r}{\iota}  \arrow{rd} 
		& B(A) \arrow{d} \\
		& A
	\end{tikzcd}
	
\end{center}

(b) The embedding $\iota$ admits a section $\pi:B\to \mathbb{A}$, i.e.\ an $A^e$-chain map $\pi$ with
$\pi\iota= id_{\mathbb{A}}$.

(c) There is a diagonal map that satisfies $\Delta_{\mathbb{A}}^{(2)}=(\pi\ot_A\pi\ot_A\pi)\Delta^{(2)}_{B(A)}\iota$ where $\Delta^{(2)}=(id\ot \Delta)\Delta$.

 We give the following theorem which is the combination of \cite[Theorem 3.2.5]{NW} and \cite[Lemma 3.4.1]{NW} that allows us to use a different resolution for the bracket calculation.
\begin{theo}\label{theo brac}
	Suppose $\mathbb{A}\stackrel{\mu}\rightarrow A$ is a projective $A$-bimodule resolution of $A$ that satisfies the hypotheses (a)-(c). Let $\phi:\mathbb{A}\otimes_A \mathbb{A}\to \mathbb{A}$ be any contracting homotopy for the chain map $F_\mathbb{A}:\mathbb{A}\ot_A \mathbb{A}\to \mathbb{A}$ defined by $F_\mathbb{A}:=(\mu\otimes_A id_\mathbb{A}- id_\mathbb{A}\otimes_A \mu)$, i.e.
	\begin{equation}\label{org phi}
	   d(\phi):=d_\mathbb{A}\phi+\phi d_{ \mathbb{A}\otimes_A \mathbb{A}}=F_\mathbb{A}.
	\end{equation} 
	Then for cocycles $f$ and $g$ in
	Hom$_{A^e}(\mathbb{A},A)$, the bracket given by 
	\begin{equation}\label{brac}
	[f, g]_\phi = f\circ_\phi g-(-1)^{(|f|-1)(|g|-1)}g\circ_\phi f
	\end{equation}
	where the circle product is 
	\begin{equation}  \label{circ}
	f\circ_\phi g=f\phi(id_\mathbb{A}\otimes_A g \otimes_A id_\mathbb{A})\Delta^{(2)}
	\end{equation}
	agrees with the Gerstenhaber bracket on cohomology.
\end{theo}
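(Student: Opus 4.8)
This theorem is assembled from two results of Negron and Witherspoon, so the plan is to recall the mechanism behind each and to check that hypotheses (a)--(c) are exactly what splices them together. The first ingredient, \cite[Theorem 3.2.5]{NW}, is a homotopy-lifting description of the bracket. For a cocycle $g\in\Hom{A^e}{\mathbb{A}}{A}$ of homological degree $n$ I would introduce
\begin{equation*}
\psi_g:=\phi\,(\id{\mathbb{A}}\ot_A g\ot_A\id{\mathbb{A}})\,\Delta^{(2)}\colon\mathbb{A}\longrightarrow\mathbb{A},
\end{equation*}
a map of degree $1-n$, so that the circle product (\ref{circ}) reads simply $f\circ_\phi g=f\psi_g$. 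The point to verify is that $\psi_g$ is a homotopy lifting of $g$: using the coassociativity $\Delta^{(2)}=(\id{\mathbb{A}}\ot_A\Delta)\Delta=(\Delta\ot_A\id{\mathbb{A}})\Delta$, that $\Delta$ is a chain map, that $gd_\mathbb{A}=0$, the identities $(\mu\ot_A\id{\mathbb{A}})\Delta=\id{\mathbb{A}}=(\id{\mathbb{A}}\ot_A\mu)\Delta$ for the diagonal coming from (c), and above all the defining identity (\ref{org phi}) for $\phi$, a direct (sign-sensitive) computation gives
\begin{equation*}
d_\mathbb{A}\psi_g-(-1)^{n-1}\psi_g d_\mathbb{A}=(g\ot_A\id{\mathbb{A}}-\id{\mathbb{A}}\ot_A g)\,\Delta .
\end{equation*}
From this, \cite[Theorem 3.2.5]{NW} extracts two things: the cohomology class of (\ref{brac}) is independent of the choice of contracting homotopy $\phi$ and of the cocycle representatives (two choices are linked by a chain homotopy via the comparison theorem for projective resolutions, $\mathbb{A}\ot_A\mathbb{A}$ being a projective $A^e$-resolution of $A$); and on the bar resolution $B(A)$, with its standard diagonal $\Delta_{B(A)}$ and a standard contracting homotopy $\Phi$, the formula (\ref{brac}) reproduces, term by term, the Gerstenhaber circle product of this section, hence the Gerstenhaber bracket.

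The second ingredient, \cite[Lemma 3.4.1]{NW}, is the transfer to $\mathbb{A}$. Given cocycles $f,g$ on $\mathbb{A}$, the pullbacks $f\pi$ and $g\pi$ are cocycles on $B(A)$ representing the images of the classes of $f$ and $g$ under the isomorphism induced on $\HH^*(A,A)$ by the chain homotopy equivalence of (a)--(b) (its inverse being induced by $\iota$, since $\pi\iota=\id{\mathbb{A}}$). One then transports $\Phi$ to $\mathbb{A}$: the map $\phi_0:=(\pi\ot_A\pi)\,\Phi\,(\iota\ot_A\iota)$, corrected if necessary so as to remain a genuine contracting homotopy for $F_\mathbb{A}$, is admissible, and using the diagonal compatibility $\Delta^{(2)}_{\mathbb{A}}=(\pi\ot_A\pi\ot_A\pi)\,\Delta^{(2)}_{B(A)}\,\iota$ of hypothesis (c) together with $\pi\iota=\id{\mathbb{A}}$ one obtains the on-the-nose identity $[f,g]_{\phi_0}=\big([f\pi,\,g\pi]_{\Phi}\big)\iota$. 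The right-hand side is the pullback along $\iota$ of the Gerstenhaber bracket of $f\pi$ and $g\pi$, so it represents the Gerstenhaber bracket of the classes of $f$ and $g$; combined with the $\phi$-independence above, this shows that $[f,g]_\phi$ represents the same class for every contracting homotopy $\phi$, which is the assertion.

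I expect the transfer step to be the real obstacle. Checking that $\phi_0$ (or its correction) genuinely satisfies $d_\mathbb{A}\phi_0+\phi_0 d_{\mathbb{A}\ot_A\mathbb{A}}=F_\mathbb{A}$, and controlling the Koszul signs created by the degree shifts of $\phi$ and $g$ and by the passage through $\mathbb{A}\ot_A\mathbb{A}\ot_A\mathbb{A}$, is where the care concentrates; hypotheses (a)--(c) are used precisely here, so none of them is superfluous. By contrast, the homotopy-lifting identity for $\psi_g$ and the independence of $\phi$ are formal consequences of (\ref{org phi}) and the comparison theorem for projective resolutions, and should go through without surprises.
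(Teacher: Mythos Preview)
The paper does not prove this theorem at all: it is stated without proof as ``the combination of \cite[Theorem 3.2.5]{NW} and \cite[Lemma 3.4.1]{NW}'' and used as a black box. Your proposal is a correct sketch of the arguments behind those two cited results, organized in the same way (first the homotopy-lifting interpretation of $\psi_g$ and the $\phi$-independence from \cite[Theorem 3.2.5]{NW}, then the transfer along $\iota,\pi$ and the diagonal compatibility of hypothesis (c) from \cite[Lemma 3.4.1]{NW}), so it matches the intended route; you are simply supplying detail the paper omits by citation.
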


In general, it is not easy to calculate the map $\phi$ by the formula \eqref{org phi}. We use alternative way to find $\phi$.

Let $h$ be any $k$-linear contracting homotopy for the identity map on the extended complex $\mathbb{A}\to A\to 0$ where $\mathbb{A}$ is free. A contracting homotopy $\phi_i:(\mathbb{A}\otimes_A\mathbb{A})_i\longrightarrow\mathbb{A}_{i+1}$ in Theorem \ref{theo brac} is constructed by the following formula \cite[Lemma 3.3.1]{NW}: 
\begin{equation}\label{mapphi}
\phi_i=h_i((F_\mathbb{A})_i-\phi_{i-1}d_{(\mathbb{A}\otimes_A\mathbb{A})_i}).
\end{equation}

\section{Bracket on Hochschild cohomology of $A=k[x]/(x^p)$}
Let $A=k[x]/(x^p)$ where $k$ is a field of characteristic 0 and $p>2$ is an integer. We compute the Lie bracket on Hochschild cohomology of $A$ by Theorem \ref{theo brac}. We work on a smaller resolution of $A$ than the bar resolution of $A$. Consider the following $A^e$-module resolution of $A$:
\begin{equation}\label{res A}
\mathbb{A}:\cdots \stackrel{v.}{\longrightarrow} A^{e}\stackrel{u.}{\longrightarrow} A^{e} \stackrel{v.}{\longrightarrow} A^{e} \stackrel{u.}{\longrightarrow} A^{e} \stackrel{\pi}{\longrightarrow} A \longrightarrow 0,
\end{equation}
where $u=x\otimes 1-1\otimes x$, $v=x^{p-1}\otimes 1+x^{p-2}\ot x+\cdots+x\otimes x^{p-2}+1\otimes x^{p-1}$, and $\pi$ is the multiplication.

 The bracket on $A$ where $k$ is a field with positive characteristic, is calculated by C. Negron and S. Witherspoon \cite[Section 5]{NW}. We adopt the contracting homotopy $h$ for the identity map from that calculation and obtain a new map $h$ for our setup. Let $\xi_i$ be the element $1\otimes1$ of $\mathbb{A}_i$. The following maps $h_n:\mathbb{A}_n\longrightarrow \mathbb{A}_{n+1}$ form a contracting homotopy for identity map, as we can see by direct calculation:

\begin{align}\label{hn}
\begin{split}
    h_{-1}(x^i)&=\xi_0x^i,\\
h_{0}(x^i\xi_0x^j)&=\sum_{l=0}^{i-1}x^l\xi_1x^{i+j-1-l},\\
h_{1}(x^i\xi_1x^j)&=\delta_{i,p-1}x^j\xi_2,\\
h_{2n}(x^i\xi_{2n}x^j)&=-\sum_{l=0}^{j-1}x^{i+j-1-l}\xi_{2n+1}x^l \text{    $(n\geq 2)$  ,}\\
h_{2n+1}(x^i\xi_{2n+1}x^j)&=\delta_{j,p-1}x^i\xi_{2n+2} \text{    $(n\geq 2)$  .}
\end{split}
\end{align}

Then, we take $\phi_{-1}=0$ and construct the following $A^e$-linear maps $\phi_i:(\mathbb{A}\otimes_A\mathbb{A})_i\longrightarrow\mathbb{A}_{i+1}$ for degree 1 and 2 by (\ref{mapphi}):

\begin{align}\label{phin}
\begin{split}
    &\phi_0(\xi_0\otimes_A x^i\xi_0)=\sum_{l=0}^{i-1}x^l\xi_1x^{i-1-l},\\
&\phi_1(\xi_1\otimes_A x^i\xi_0)=-\delta_{i,p-1}\xi_2,\\
&\phi_1(\xi_0\otimes_A x^i\xi_1)=\delta_{i,p-1}\xi_2.
\end{split}
\end{align}

Lastly, we form the following diagonal map $\Delta:\mathbb{A}\longrightarrow\mathbb{A}\otimes_A\mathbb{A}$:
\begin{align}\label{Deltan}
\begin{split}
    &\Delta_0(\xi_0) = \xi_0\otimes_A\xi_0,\\
&\Delta_1(\xi_1) =\xi_1\otimes_A \xi_0 + \xi_0 \otimes_A \xi_1,\\
&\Delta_{2n}(\xi_{2n}) = \sum_{i=0}^{n}\xi_{2i} \otimes_A\xi_{2n-2i} +\sum_{i=0}^{n-1}\sum_{a+b+c=p-2}^{}x^a\xi_{2i+1} \otimes_A x^b\xi_{2n-2i-1}x^c, \text{ for }n\geq 1\\
&\Delta_{2n+1}(\xi_{2n+1}) = \sum_{i=0}^{2n+1}\xi_i \otimes_A \xi_{2n+1-i}, \text{ for }n\geq 1.
\end{split}
\end{align}
It can be seen that the map $\Delta$ is a chain map lifting the canonical isomorphism $A\stackrel{\sim}\rightarrow  A\ot_A A$ by direct calculation.

Now, we are ready to calculate the brackets on cohomology in low degrees. By applying Hom$_{A^e}(-,A)$
to $\mathbb{A}$, we see that the differentials are all 0 in odd degrees and $(px^{p-1})\cdot$ in even degrees. In each degree, the term in the Hom complex is the free $A$-module Hom$_{A^e}(A^e,A)\cong A$. Moreover, since $p$ is not divisible by the characteristic of $k$, we deduce HH$^0(A)\cong A, \text{HH}^{2i+1}(A)\cong (x),\text{ and }\text{HH}^{2i}(A)\cong A/(x^{p-1})$ \cite[Section 1.1]{HH}. 

Let $x^j\xi^*_i \in$ Hom$_{A^e}(A^e,A)$ denote the function that takes $\xi_i$ to $x^j$. Since the characteristic of $k$ does not divide $p$, the Hochschild cohomology as an $A$-algebra is generated by $\xi_1^{*}$ and $\xi_2^{*}$ \cite[Example 2.2.2]{HH}. We only calculate the brackets of the elements of degrees 1 and 2 which can be extended to higher degrees by the formula (\ref{cupbrac}). Hence, we have the following calculations:

The bracket of the elements of degrees 1 and 1:\\
\begin{align*}
&(x^i\xi_1^{*}\circ_{\phi}x^j\xi_1^{*})(\xi_1)\\
&=x^i\xi_1^{*}\phi(1\otimes_A x^j\xi_1^{*}\otimes_A1)\Delta^{(2)}(\xi_1)\\
&=x^i\xi_1^{*}\phi(1\otimes_Ax^j\xi_1^{*}\otimes_A1)(\xi_1\otimes_A\xi_0\otimes_A\xi_0+\xi_0\otimes_A\xi_1\otimes_A\xi_0+\xi_0\otimes_A\xi_0\otimes\xi_1)\\
&=x^i\xi_1^{*}\phi(\xi_0\otimes_A x^j\xi_0)\\
&=x^i\xi_1^{*}(\xi_1x^{j-1}+x\xi_1x^{j-2}+\cdots+x^{j-1}\xi_1)\\
&=jx^{i+j-1}
\end{align*}
and by symmetry
$(x^j\xi_1^{*}\circ_{\phi}x^i\xi_1^{*})(\xi_1)=ix^{i+j-1}$. Therefore, we have 
$$[x^i\xi_1^{*},x^j\xi_1^{*}]=(j-i)x^{i+j-1}\xi_1^{*}.$$
The bracket of the elements of degrees 1 and 2:\\
\begin{align*}
&(x^i\xi_1^{*}\circ_{\phi}x^j\xi_2^{*})(\xi_2)\\
&=x^i\xi_1^{*}\phi(1\otimes_A x^j\xi_2^{*}\otimes_A1)\Delta^{(2)}(\xi_2)\\
&=x^i\xi_1^{*}\phi(1\otimes_A x^j\xi_2^{*}\otimes_A1)(\xi_0\otimes_A\xi_0\otimes_A\xi_2+\xi_0\otimes_A\xi_2\otimes_A\xi_0+\xi_2\otimes_A \xi_0\otimes_A\xi_0\\
&+\xi_0\otimes_A\sum_{\substack{a+b+c \\ =p-2}}(x^a\xi_1\ot_A x^b\xi_1x^c)+\sum_{\substack{a+b+c \\ =p-2}}x^a\xi_1\ot_A x^b(\xi_0\ot_A \xi_1+\xi_1\ot_A x_0)x^c)\\
&=x^i\xi_1^{*}\phi(\xi_0\otimes_A x^j\xi_0)=x^i\xi_1^{*}(\xi_1x^{j-1}+x\xi_1x^{j-2}+\cdots+x^{j-1}\xi_1)=jx^{i+j-1}.
\end{align*}
The circle product in the reverse order is
\begin{align*}
&(x^j\xi_2^{*}\circ_{\phi}x^{p-1}\xi_1^{*})(\xi_2)\\
&=x^j\xi_2^{*}\phi(1\otimes_A x^{p-1}\xi_1^{*}\otimes_A1)\Delta^{(2)}(\xi_2)\\
&=x^j\xi_2^{*}\phi(1\otimes_A x^{p-1}\xi_1^{*}\otimes_A1)(\xi_0\otimes_A\xi_0\otimes_A\xi_2+\xi_0\otimes_A\xi_2\otimes_A\xi_0+\xi_2\otimes_A \xi_0\otimes_A\xi_0\\
&+\xi_0\otimes_A\sum_{\substack{a+b+c \\ =p-2}}(x^a\xi_1\ot_A x^b\xi_1x^c)+\sum_{\substack{a+b+c \\ =p-2}}x^a\xi_1\ot_A x^b(\xi_0\ot_A \xi_1+\xi_1\ot_A x_0)x^c)\\
&=x^j\xi_2^{*}\phi(\sum_{\substack{a+b+c \\ =p-2}}(\xi_0\otimes_A x^{a+b+i}\xi_1x^c-x^a\xi_1\ot_A x^{b+i}\xi_0x^c))\\
&=x^j\xi_2^{*}(\sum_{\substack{a+b+c \\ =p-2}}(\delta_{a+b+i,p-1}\xi_2x^c+x^a\delta_{b+i,p-1}\xi_2x^c))\\
&=x^j\xi_2^{*}((p-i)\xi_2x^{i-1}+\sum_{\substack{a+c \\ =i-1}}x^a\xi_2x^c)\\
&=(p-i)x^{i+j-1}+\sum_{\substack{a+c \\ =i-1}}x^{a+c+j}=(p-i)x^{i+j-1}+ix^{i+j-1}=px^{i+j-1}.
\end{align*}
Therefore, we obtain

$$[x^i\xi_1^{*},x^j\xi_2^{*}]=(j-p)x^{i+j-1}\xi^{*}_2.$$ 

Lastly, the bracket of the elements of degrees 2 and 2:\\
\begin{align*}
(x^i\xi_2^{*}\circ_{\phi}x^j\xi_2^{*})(\xi_3)&=x^i\xi_2^{*}\phi(1\otimes_A x^j\xi_2^{*}\otimes_A1)\Delta^{(2)}(\xi_3)\\
&=x^i\xi_2^{*}\phi(\xi_1\otimes_A x^j\xi_0+\xi_0\otimes_A x^j\xi_1)=x^i\xi_2^{*}(0)=0
\end{align*}
and by symmetry
$(x^j\xi_2^{*}\circ_{\phi}x^i\xi_2^{*})(\xi_3)=0$. Therefore, we have 
$$[(x^i\xi_2^{*},x^j\xi_2^{*})]=0.$$

As a consequence, the brackets for the elements of degrees 1 and 2 are 
\begin{align*}
[(x^i\xi_1^{*},x^j\xi_1^{*})]&=(j-i)x^{i+j-1}\xi_1^{*},\\
[(x^i\xi_1^{*},x^j\xi_2^{*})]&=(j-p)x^{i+j-1}\xi^{*}_2,\\
[(x^i\xi_2^{*},x^j\xi_2^{*})]&=0.
\end{align*}

Brackets in higher degrees can be determined from these and the identity (\ref{cupbrac})
since the Hochschild cohomology is generated as an $A$-algebra under the cup product in degrees 1
and 2. 

L. Grimley, V. C. Nguyen, and S. Witherspoon \cite{GNW} calculated Gerstenhaber brackets on Hochschild cohomology of a twisted tensor product of algebras. S. Sanchez-Flores \cite{SF} also calculated the bracket on group algebras of a cyclic group over a field of positive characteristic which is isomorphic to $A=k[x]/(x^p)$. C. Negron and S. Witherspoon \cite{NW} calculated the bracket on group algebras of a cyclic group over a field of positive characteristic as well with the same $h,\phi$, and $\Delta$ maps. Our calculation agrees with those except slightly different $[(x^i\xi_1^{*},x^j\xi_2^{*})]$.
\section{Bracket on Hopf algebra cohomology of a Taft algebra}
The Taft algebra $T_p$ with $p>2$ is a $k$-algebra generated by $g$ and $x$ satisfying the relations : $g^p = 1, x^p = 0,\text{ and }xg =\omega gx$ where $\omega$ is a primitive $p$-th root of unity. It is a Hopf algebra with the structure:
\begin{itemize}
	\item $\Delta(g)=g\otimes g$, $\Delta(x)=1\otimes x+x\otimes g$
	\item $\varepsilon(g)=1,\varepsilon(x)=0$
	\item $S(g)=g^{-1},S(x)=-xg^{-1}.$
\end{itemize}
Note that as an algebra, $T_p$ is a skew group algebra $A\rtimes kG$ where $A=k[x]/(x^p)$ and $G=<g\mid g^p=1>$. The action of $G$ on $A$ is given by $^gx=\omega x$.

In this section, our main goal is to calculate the bracket on Hochschild cohomology of $T_p$ with the same technique in Section 3 and find the bracket on Hopf algebra cohomology of $T_p$ by using the embedding of $\text{H}^*(T_p,k)$ into $\text{HH}^*(T_p,T_p)$. 

We first find the bracket on Hochschild cohomology of $T_p$. Let $\mathcal{D}$ be the skew group algebra $A^e\rtimes G$ where the action of $G$ on $A^e$ is diagonal, i.e. $^g(a\ot b)=(^ga)\ot (^gb)$. Then, there is the following isomorphism \cite[Section 2]{BW}

 $$\mathcal{D}=A^e\rtimes G\cong \underset{g\in G}{\bigoplus}Ag\ot Ag^{-1}\subset T_p^e.$$
Hence $\mathcal{D}$ is isomorphic to a subalgebra of $T_p^e$ via $a_1\ot a_2\ot g\mapsto a_1g\ot (^{g^{-1}}a_2g^{-1})$. Moreover, $A$ is a $\mathcal{D}$-module under the following left and right action \cite[Section 4]{BW}:
$$(a_1g\ot a_2g^{-1})a_3=a_1ga_3a_2g^{-1}=a_1(^g(a_3a_2))$$
$$a_3(a_1g\ot a_2g^{-1})=a_2g^{-1}a_3a_1g=a_2(^{g^{-1}}(a_3a_1)).$$

Remember the resolution \eqref{res A}
\begin{equation*}
\mathbb{A}:\cdots \stackrel{v.}{\longrightarrow} A^{e}\stackrel{u.}{\longrightarrow} A^{e} \stackrel{v.}{\longrightarrow} A^{e} \stackrel{u.}{\longrightarrow} A^{e} \stackrel{\pi}{\longrightarrow} A \longrightarrow 0.
\end{equation*}
This is also a $\mathcal{D}$-projective resolution of A and the action of $G$ on $A^e$ is given by 

\begin{itemize}
	\item $g\cdot(a_1\ot a_2)=(^ga_1)\ot(^ga_2)$ in even degrees,
	\item $g\cdot(a_1\ot a_2)=\omega (^ga_1)\ot(^ga_2)$ in odd degrees.
\end{itemize}

From the resolution $\mathbb{A}$, we construct the following $T_p^{e}$ resolution of $T_p$: 
\begin{equation} \label{resol}
T_p^{e}\ot_{\mathcal{D}}\mathbb{A}:\cdots {\longrightarrow} T_p^{e}\ot_{\mathcal{D}}{A^{e}}{\longrightarrow} T_p^{e}\ot_{\mathcal{D}}{A^{e}} {\longrightarrow} T_p^{e}\ot_{\mathcal{D}}{A^{e}}{\longrightarrow} T_p^{e}\ot_{\mathcal{D}}{A} \longrightarrow 0.
\end{equation}
It is known that, $ T_p\cong T_p^{e}\ot_{\mathcal{D}}{A} $ as $T_p$-bimodules via the map sending $x^i\ot g^k$ to $(1\ot g^k)\ot_{\mathcal{D}}x^i$ \cite[Section 3.5]{HH}. Then we have $ A\ot T_p\cong T_p^{e}\ot_{\mathcal{D}}{A^e} $ with the $T_p$-bimodule isomorphism given by
\begin{equation}\label{iso Tp}
    \kappa(x^i\ot(x^j\ot g^k))=(1\ot g^k)\ot_{\mathcal{D}}(x^i\ot x^j).
\end{equation}

Then, we obtain the following resolution $\tilde{\mathbb{A}}$ which is isomorphic to the resolution (\ref{resol}), i.e. 
\begin{equation}\label{resol2}
\tilde{\mathbb{A}}:\cdots\stackrel{\tilde{u}.}{\longrightarrow} A\ot T_p\stackrel{\tilde{v}.} {\longrightarrow}A\ot T_p\stackrel{\tilde{u}.}{\longrightarrow} A\ot T_p \stackrel{\tilde{\pi}.}{\longrightarrow}T_p\longrightarrow 0
\end{equation}
where $\tilde{v}=v\ot id_{kG},\tilde{u}=u\ot id_{kG}$, and $\tilde{\pi}=\pi\ot id_{kG}$.

The following lemma gives us a contracting homotopy for the identity map on the resolution $\tilde{\mathbb{A}}$.
\begin{lemma}\label{tildehn}
Let $h_n$ be a contracting homotopy in \eqref{hn}. Then $\tilde{h}_n=h_n\otimes1_{kG}$ forms a contracting homotopy for the identity map on $\tilde{\mathbb{A}}$.
\end{lemma}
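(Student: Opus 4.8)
The plan is to verify directly that the collection $\tilde{h}_n = h_n \otimes 1_{kG}$ satisfies the contracting-homotopy identities for the identity map on the extended complex $\tilde{\mathbb{A}} \to T_p \to 0$, by reducing everything to the already-established fact that the maps $h_n$ of \eqref{hn} form a contracting homotopy on the extended complex $\mathbb{A} \to A \to 0$. The key structural observation is that the resolution $\tilde{\mathbb{A}}$ is obtained from $\mathbb{A}$ by tensoring with $kG$ in the ``functorial'' way recorded just above: each term $A^e$ becomes $A \otimes T_p$ (since $A^e \otimes kG \cong A \otimes (A \otimes kG) \cong A \otimes T_p$ as $k$-vector spaces under $\kappa$ of \eqref{iso Tp}), the differentials are $\tilde{u} = u \otimes \id{kG}$ and $\tilde{v} = v \otimes \id{kG}$, and the augmentation is $\tilde{\pi} = \pi \otimes \id{kG}$. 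So all the maps in sight are of the form (something on the $A$-part) $\otimes\, \id{kG}$.

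First I would make precise the $k$-linear identification under which ``$\otimes 1_{kG}$'' makes sense: writing an element of $\tilde{\mathbb{A}}_n$ as $\omega \otimes g^k$ with $\omega \in \mathbb{A}_n$ (an element of the $n$-th copy of $A^e$) and $g^k \in kG$, set $\tilde{h}_n(\omega \otimes g^k) := h_n(\omega) \otimes g^k$, and similarly $\tilde{h}_{-1}(x^i \otimes g^k) := h_{-1}(x^i) \otimes g^k = \xi_0 x^i \otimes g^k$. Then I would check that $\tilde{h}$ is well-defined as a $k$-linear map on $\tilde{\mathbb{A}}_n$ (this is automatic, since the identification $\tilde{\mathbb{A}}_n \cong \mathbb{A}_n \otimes_k kG$ is a genuine $k$-linear isomorphism and $h_n$ is $k$-linear). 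Next, since $\tilde{u} = u \otimes \id{kG}$, $\tilde{v} = v \otimes \id{kG}$, $\tilde{\pi} = \pi \otimes \id{kG}$ and similarly for $\tilde{h}$, and since tensoring over $k$ with a fixed space is additive and compatible with composition, the homotopy identities
\begin{align*}
\tilde{\pi}\tilde{h}_{-1} &= (\pi h_{-1}) \otimes \id{kG} = \id{T_p},\\
\tilde{h}_{-1}\tilde{\pi} + \tilde{d}_0\tilde{h}_0 &= (h_{-1}\pi + d_0 h_0) \otimes \id{kG} = \id{A \otimes T_p},\\
\tilde{h}_{n-1}\tilde{d}_n + \tilde{d}_{n+1}\tilde{h}_n &= (h_{n-1}d_n + d_{n+1}h_n)\otimes \id{kG} = \id{\tilde{\mathbb{A}}_n} \qquad (n \geq 1)
\end{align*}
follow immediately from the corresponding identities for $h$ on $\mathbb{A}$, where $\tilde{d}_n \in \{\tilde{u}\cdot, \tilde{v}\cdot\}$ alternates with $d_n \in \{u\cdot, v\cdot\}$ exactly as in \eqref{res A} and \eqref{resol2}. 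That is the whole argument at the level of $k$-vector-space maps.

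The one genuine point requiring care — and the place where I expect the main (mild) obstacle to lie — is the bookkeeping of the $G$-grading and the odd-degree twist. Recall from the discussion preceding the lemma that the $G$-action on the copies of $A^e$ in $\mathbb{A}$ carries an extra factor of $\omega$ in odd degrees, and the isomorphism $\kappa$ of \eqref{iso Tp} together with $T_p \cong T_p^e \otimes_{\mathcal{D}} A$ is what translates $T_p^e \otimes_{\mathcal{D}} \mathbb{A}$ into the concrete complex $\tilde{\mathbb{A}}$ of \eqref{resol2}. I would therefore (i) confirm that under $\kappa$ the differentials of $T_p^e \otimes_{\mathcal{D}} \mathbb{A}$ really do become left multiplication by $\tilde{u}$ and $\tilde{v}$ as claimed — which uses that $u$ and $v$ are $G$-equivariant up to the stated scalar and that the scalar cancels across a composition $d_{n+1}d_n$ — and then (ii) observe that precisely because $h$ is only asked to be a $k$-linear (not $A^e$-linear, not $G$-equivariant) contracting homotopy, the twisting plays no role at all in the verification of the homotopy identities: $\tilde{h}$ inherits $k$-linearity and nothing more is needed. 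So after setting up notation carefully, the proof is a one-line consequence of $(-)\otimes_k kG$ being an exact additive functor applied to the homotopy equations of \eqref{hn}; the only real content is making sure the reader sees that $\tilde{u}, \tilde{v}, \tilde{\pi}, \tilde{h}$ are all ``base changes'' of $u, v, \pi, h$ along $\kappa$. I would close by remarking that this $\tilde{h}$ is what will be fed into formula \eqref{mapphi} to produce the contracting homotopy $\tilde{\phi}$ needed for the bracket computation on $\mathrm{HH}^*(T_p, T_p)$ in the rest of the section.
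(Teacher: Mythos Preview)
Your proposal is correct and follows essentially the same approach as the paper: both arguments reduce the homotopy identities on $\tilde{\mathbb{A}}$ to those on $\mathbb{A}$ via the factorisation $(h_{i-1}d_i + d_{i+1}h_i)\otimes \id{kG} = \id{\mathbb{A}_i}\otimes \id{kG}$, using that all differentials and the proposed homotopy are of the form $(-)\otimes \id{kG}$. The paper's proof is terser---it does not discuss the odd-degree twist or the transport along $\kappa$---but your extra bookkeeping is sound and your own observation that only $k$-linearity of $h$ is needed explains why those subtleties are harmless.
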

\begin{proof}
For $n\geq 0$, the domain of $h_n\ot 1_{kG}$ is $A\otimes A\otimes kG$ which is $A\otimes T_p$ as a vector space. Moreover, by definition of contracting homotopy, $h_n$ satisfy
$$h_{i-1}d_i+d_{i+1}h_{i}=id_{\mathbb{A}_i}.$$
Then,
\begin{align*}
\tilde{h}_{i-1}\tilde{d}_i+\tilde{d}_{i+1}\tilde{h}_{i}&=(h_{i-1}\ot id_{kG})(d_i\ot id_{kG})+(d_{i+1}\ot id_{kG})(h_{i}\ot id_{kG})\\
&=(h_{i-1}d_i\ot id_{kG})+(d_{i+1}h_{i}\ot id_{kG})=(h_{i-1}d_i+d_{i+1}h_{i})\ot id_{kG}\\
&=id_{\mathbb{A}_i}\ot id_{kG}=id_{\tilde{\mathbb{A}}_i}
\end{align*}
and that implies $\tilde{h}_n$ is a contracting homotopy for $\tilde{\mathbb{A}}$. The proof is similar for $n=-1$.
\end{proof}
 We abbreviate $a_1\ot a_2\ot g\in A\ot T_p$ by $a_1\ot a_2g$. By the Lemma \ref{tildehn}, we obtain
\begin{align*}
\tilde{h}_{-1}(x^i g)&=\xi_0x^i g,\\
\tilde{h}_{0}(x^i\xi_0x^j g)&=\sum_{l=0}^{i-1}x^l\xi_1x^{i+j-1-l} g,\\
\tilde{h}_{1}(x^i\xi_1x^j g)&=\delta_{i,p-1}x^j\xi_2 g,\\
\tilde{h}_{2n}(x^i\xi_{2n}x^jg)&=-\sum_{l=0}^{j-1}x^{i+j-1-l}\xi_{2n+1}x^lg,\\
\tilde{h}_{2n+1}(x^i\xi_{2n+1}x^jg)&=\delta_{j,p-1}x^i\xi_{2n+2} g.
\end{align*}

We need a lemma to have the linear maps $\tilde{\phi}_i:(\tilde{\mathbb{A}}\otimes_{T_p}\tilde{\mathbb{A}})_i\longrightarrow \tilde{\mathbb{A}}_{i+1}$. However, we first mention that there is an isomorphism from $(A\ot T_p)\ot_{T_p}(A\ot T_p)$ to $(A\ot A)\ot_A(A\ot A)\ot kG$ as $T_p^e$-modules given by 
\begin{equation}\label{iso ATp}
\psi((x^{i_1}\ot x^{j_1}g^{k_1})\ot_{T_p}(x^{i_2}\ot x^{j_2}g^{k_2}))=\omega^{k_1(i_2+j_2)}(x^{i_1}\ot x^{j_1})\ot_A(x^{i_2}\ot x^{j_2}) g^{(k_1+k_2)}.
\end{equation}

 \begin{lemma}\label{lemma tildephi}
 Let $F_\mathbb{A}=(\pi\otimes_A id_\mathbb{A}- id_\mathbb{A}\otimes_A \pi)$ be the chain map for the resolution $\mathbb{A}$ in \eqref{res A} which is used for calculation of $\phi$ in \eqref{phin}. Then $F_{\tilde{\mathbb{A}}}:\tilde{\mathbb{A}}\ot_{T_p}\tilde{\mathbb{A}}\to \tilde{\mathbb{A}}$ defined by $(\tilde{\pi}\otimes_{T_p} id_{\tilde{\mathbb{A}}}- id_{\tilde{\mathbb{A}}}\otimes_{T_p} \tilde{\pi})$ is exactly $(F_\mathbb{A}\ot id_{kG})\psi$. Moreover $\tilde{\phi}:=(\phi\ot id_{kG})\psi$ is a contracting homotopy for $F_{\tilde{\mathbb{A}}}$.
 \end{lemma}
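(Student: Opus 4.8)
The plan is to deduce both claims from the analogous facts about $\mathbb{A}$, $F_\mathbb{A}$ and $\phi$ from Section~3, transported along the $T_p^e$-module isomorphism $\psi$ of \eqref{iso ATp} together with the three ``untwisting'' identities that hold by construction of $\tilde{\mathbb{A}}$: $\tilde{\pi}=\pi\otimes id_{kG}$, $d_{\tilde{\mathbb{A}}}=d_{\mathbb{A}}\otimes id_{kG}$ (i.e.\ $\tilde{u}=u\otimes id_{kG}$ and $\tilde{v}=v\otimes id_{kG}$), and the fact that degreewise $\tilde{\mathbb{A}}_n=A\otimes T_p$ is $\mathbb{A}_n\otimes kG$ as a vector space.

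First I would prove the identity $F_{\tilde{\mathbb{A}}}=(F_\mathbb{A}\otimes id_{kG})\psi$. Both sides are $T_p^e$-linear maps $\tilde{\mathbb{A}}\otimes_{T_p}\tilde{\mathbb{A}}\to\tilde{\mathbb{A}}$, and both vanish on the summand $\tilde{\mathbb{A}}_a\otimes_{T_p}\tilde{\mathbb{A}}_b$ whenever $a,b>0$, since $\tilde{\pi}$ (respectively $\pi$) is concentrated in homological degree $0$; so it is enough to evaluate them on $T_p^e$-module generators of $\tilde{\mathbb{A}}_0\otimes_{T_p}\tilde{\mathbb{A}}_b$ and of $\tilde{\mathbb{A}}_a\otimes_{T_p}\tilde{\mathbb{A}}_0$. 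On a generator $(x^{i_1}\otimes x^{j_1}g^{k_1})\otimes_{T_p}(x^{i_2}\otimes x^{j_2}g^{k_2})$ one applies $\tilde{\pi}(x^i\otimes x^j g^k)=x^{i+j}g^k$ to the degree-$0$ tensor factor, slides the resulting element of $T_p$ across $\otimes_{T_p}$ using the $T_p$-bimodule structure on $A\otimes T_p$ inherited from $T_p^e\otimes_{\mathcal{D}}(-)$ (via $\kappa$), and compares with the effect of first applying $\psi$, which produces the scalar $\omega^{k_1(i_2+j_2)}$, and then $F_\mathbb{A}\otimes id_{kG}$. This is a direct bookkeeping computation in which the powers of $\omega$ produced on the two sides agree.

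Granting that identity, the contracting-homotopy statement is formal and needs no new data (in particular no freeness argument as in \eqref{mapphi}). By \eqref{org phi} the homotopy $\phi$ from Section~3 satisfies $d_{\mathbb{A}}\phi+\phi\, d_{\mathbb{A}\otimes_A\mathbb{A}}=F_\mathbb{A}$; tensoring with $id_{kG}$ and using $d_{\tilde{\mathbb{A}}}=d_{\mathbb{A}}\otimes id_{kG}$ gives $d_{\tilde{\mathbb{A}}}(\phi\otimes id_{kG})+(\phi\otimes id_{kG})(d_{\mathbb{A}\otimes_A\mathbb{A}}\otimes id_{kG})=F_\mathbb{A}\otimes id_{kG}$. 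Pre-composing with $\psi$ and using that $\psi$ intertwines the differentials, $(d_{\mathbb{A}\otimes_A\mathbb{A}}\otimes id_{kG})\,\psi=\psi\, d_{\tilde{\mathbb{A}}\otimes_{T_p}\tilde{\mathbb{A}}}$, we obtain $d_{\tilde{\mathbb{A}}}\tilde{\phi}+\tilde{\phi}\, d_{\tilde{\mathbb{A}}\otimes_{T_p}\tilde{\mathbb{A}}}=(F_\mathbb{A}\otimes id_{kG})\psi=F_{\tilde{\mathbb{A}}}$, which is the analogue of \eqref{org phi} for $\tilde{\mathbb{A}}$; hence $\tilde{\phi}$ is the desired contracting homotopy. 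I expect the main obstacle to be the joint input used here and in the first step: checking that the twist $\omega^{k_1(i_2+j_2)}$ built into $\psi$ is compatible both with $F_{\tilde{\mathbb{A}}}$ and with $\tilde{u},\tilde{v}$ — in particular keeping track of the extra factor of $\omega$ in the $G$-action on the odd-degree terms of $\mathbb{A}$ — so that $\psi$ is genuinely an isomorphism of complexes and not merely a degreewise one. Everything else is formal manipulation of $\otimes_A$ versus $\otimes_{T_p}$.
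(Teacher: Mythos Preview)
Your proposal is correct and follows essentially the same route as the paper: first verify $F_{\tilde{\mathbb{A}}}=(F_\mathbb{A}\otimes id_{kG})\psi$ by evaluating on generators, then deduce the contracting-homotopy identity formally from \eqref{org phi} after establishing the intertwining relation $(d_{\mathbb{A}\otimes_A\mathbb{A}}\otimes id_{kG})\psi=\psi\,d_{\tilde{\mathbb{A}}\otimes_{T_p}\tilde{\mathbb{A}}}$. The paper carries out exactly these two checks explicitly on generators $\xi_m1_G\otimes_{T_p}x^i\xi_n1_G$, and your identification of the $\omega$-bookkeeping as the only nontrivial point matches the paper's emphasis.
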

 \begin{proof}
 Let $(x^{i_1}\ot x^{j_1}g^{k_1})\ot_{T_p}(x^{i_2}\ot x^{j_2}g^{k_2})\in (A\ot T_p)\ot_{T_p}(A\ot T_p)$. Note that $F_{\tilde{\mathbb{A}}}$ is zero if degrees of $(x^{i_1}\ot x^{j_1}g^{k_1})$ and $(x^{i_2}\ot x^{j_2}g^{k_2})$ are both nonzero since $\tilde{\pi}$ is only defined on degree zero. Also remember that $\tilde{\pi}=\pi\ot id_{kG}$ for the resolution $\tilde{\mathbb{A}}$.
 
We check the case that the degree of $(x^{i_1}\ot x^{j_1}g^{k_1})$ is zero and the degree of $(x^{i_2}\ot x^{j_2}g^{k_2})$ is nonzero. By using definition of $F_{\tilde{\mathbb{A}}}$, we obtain
 \begin{align*}
F_{\tilde{\mathbb{A}}}((x^{i_1}\ot x^{j_1}g^{k_1})\ot_{T_p}(x^{i_2}\ot x^{j_2}g^{k_2}))&=(x^{i_1+j_1}g^{k_1})\ot_{T_p}(x^{i_2}\ot x^{j_2}g^{k_2})\\
&=\omega^{k_1(i_2+j_2)}x^{i_1+i_2+j_1}\ot x^{i_2}g^{k_1+k_2}.
 \end{align*}
 
 On the other hand, we also have 
  \begin{align*}
&(F_\mathbb{A}\ot id_{kG})\psi((x^{i_1}\ot x^{j_1}g^{k_1})\ot_{T_p}(x^{i_2}\ot x^{j_2}g^{k_2}))\\
&=(F_\mathbb{A}\ot id_{kG})(\omega^{k_1(i_2+j_2)}(x^{i_1}\ot x^{j_1})\ot_A (x^{i_2}\ot x^{j_2})g^{k_1+k_2})\\
&=\omega^{k_1(i_2+j_2)}x^{i_1+i_2+j_1}\ot x^{i_2}g^{k_1+k_2}.
 \end{align*}
 The proof for other cases are similar. Hence $F_{\tilde{\mathbb{A}}}$ and $(F_\mathbb{A}\ot id_{kG})\psi$ are identical.
 
 In order to prove $\tilde{\phi}:=(\phi \ot id_{kG})\psi$ is a contracting homotopy for $F_{\tilde{\mathbb{A}}}$, we need to show that $$\tilde{d}_{\tilde{\mathbb{A}}}\tilde{\phi}+\tilde{\phi}\tilde{d}_{\tilde{\mathbb{A}}\ot_{T_p}\tilde{\mathbb{A}}}= F_{\tilde{\mathbb{A}}}.$$
 It is clear that
 \begin{equation}\label{d phi}
    \tilde{d}_{\tilde{\mathbb{A}}}\tilde{\phi}=(d_{\mathbb{A}}\ot id_{kG})(\phi \ot id_{kG})\psi=(d_{\mathbb{A}}\phi \ot id_{kG})\psi.
 \end{equation}

We now claim that 
\begin{equation}\label{d_AotA}
\psi\tilde{d}_{\tilde{\mathbb{A}}\ot_{T_p}\tilde{\mathbb{A}}}=(d_{\mathbb{A}\ot_A \mathbb{A}}\ot id_{kG})\psi.
\end{equation}
By definition $$\tilde{d}_{\tilde{\mathbb{A}}\ot_{T_p}\tilde{\mathbb{A}}}=\tilde{d}_{\tilde{\mathbb{A}}}\ot_{T_p}id_{T_p}+(-1)^{*} id_{T_p} \ot_{T_p} \tilde{d}_{\tilde{\mathbb{A}}}$$ where $*$ is the degree of the element in left $A\ot T_p$. Moreover, $(A\ot T_p)\ot_{T_p}(A\ot T_p)$ is generated by $\xi_m1_G\ot_{T_p} x^i\xi_n1_G$ as $T_p$-bimodule. Without loss of generality, assume $m$ and $n$ are odd. Then we have the following calculation:
 \begin{flalign*}
&\begin{aligned}
&\psi \tilde{d}_{\tilde{\mathbb{A}}\ot_{T_p}\tilde{\mathbb{A}}}(\xi_m1_G\ot_{T_p} x^i\xi_n1_G)\\
&=\psi ((x\xi_m1_G-\xi_mx1_G)\ot_{T_p}x^{i}\xi_n1_G- \xi_m1_G\ot_{T_p} (x^{i+1}\xi_n1_G-x^i\xi_n x1_G))\\
&=(x\xi_m-\xi_mx)\ot_Ax^{i}\xi_n1_G-\xi_m\ot_A (x^{i+1}\xi_n-x^i\xi_n x)1_G
 \end{aligned}&&
\end{flalign*}
 and 
 \begin{flalign*}
&\begin{aligned}
&(d_{\mathbb{A}\ot_A \mathbb{A}}\ot id_{kG})\psi(\xi_m1_G\ot_{T_p} x^i\xi_n1_G)\\
&=(d_{\mathbb{A}\ot_A \mathbb{A}}\ot id_{kG})(\xi_m\ot_A x^i\xi_n1_G)\\
&=(x\xi_m-\xi_mx)\ot_Ax^{i}\xi_n1_G-\xi_m\ot_A (x^{i+1}\xi_n-x^i\xi_n x)1_G.
\end{aligned}&&
\end{flalign*}
The calculation is similar for the other cases of $m$ and $n$. Therefore,
\begin{equation}\label{phi d}
\tilde{\phi}\tilde{d}_{\tilde{\mathbb{A}}\ot_{T_p}\tilde{\mathbb{A}}}=(\phi \ot id_{kG})\psi\tilde{d}_{\tilde{\mathbb{A}}\ot_{T_p}\tilde{\mathbb{A}}}=(\phi \ot id_{kG})(d_{\mathbb{A}\ot_A\mathbb{A}} \ot id_{kG})\psi=(\phi d_{\mathbb{A}\ot_A\mathbb{A}} \ot id_{kG})\psi.    
\end{equation}
By combining \eqref{d phi} and \eqref{phi d}, we obtain
\begin{equation*}
  \tilde{d}_{\tilde{\mathbb{A}}}\tilde{\phi}+\tilde{\phi}\tilde{d}_{\tilde{\mathbb{A}}\ot_{T_p}\tilde{\mathbb{A}}}=((d_\mathbb{A}\phi+\phi d_{ \mathbb{A}\otimes_A \mathbb{A}})\ot id_{kG})\psi=(F_\mathbb{A}\ot id_{kG})\psi= F_{\tilde{\mathbb{A}}}  
\end{equation*}
whence $\tilde{\phi}=(\phi\ot id_{kG})\psi$ is a contracting homotopy for $F_{\tilde{\mathbb{A}}}$.
 \end{proof}
 We use the Lemma \ref{lemma tildephi} and find the following $T_p^e$-linear maps $\tilde{\phi}_i:(\tilde{\mathbb{A}}\otimes_{T_p}\tilde{\mathbb{A}})_i\longrightarrow \tilde{\mathbb{A}}_{i+1}$:
\begin{align*}
&\tilde{\phi}_0(\xi_01_G\ot_{T_p} x^i\xi_01_G)=\sum_{l=0}^{i-1}x^l\xi_1x^{i-1-l}1_G,\\
&\tilde{\phi}_1(\xi_11_G\ot_{T_p} x^i\xi_01_G)=-\delta_{i,p-1}\xi_21_G,\\
&\tilde{\phi}_1(\xi_01_G\ot_{T_p} x^i\xi_11_G)=\delta_{i,p-1}\xi_21_G.
\end{align*}

Next, we give a lemma to find the the diagonal map.
\begin{lemma}\label{lemma diagonal}
The map $\tilde{\Delta}:=\psi^{-1}(\Delta\ot id_{kG})$ is a diagonal map on $\tilde{\mathbb{A}}$ where $\Delta$ is in \eqref{Deltan}.
\end{lemma}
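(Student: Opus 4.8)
The plan is to verify the three properties that characterize a diagonal map on the $T_p^e$-resolution $\tilde{\mathbb{A}}$: that $\tilde{\Delta}=\psi^{-1}(\Delta\ot id_{kG})$ is (i) a morphism of complexes of $T_p$-bimodules, (ii) a chain map, and (iii) a lift of the canonical isomorphism $T_p\stackrel{\sim}{\longrightarrow}T_p\ot_{T_p}T_p$. Each of these will be deduced from the corresponding property of $\Delta$ for $\mathbb{A}$ (recorded after \eqref{Deltan}), together with the isomorphism $\psi$ from \eqref{iso ATp} and the identity \eqref{d_AotA}, in the same spirit as the proofs of Lemmas \ref{tildehn} and \ref{lemma tildephi}.

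For (ii), combine $\tilde{d}_{\tilde{\mathbb{A}}}=d_{\mathbb{A}}\ot id_{kG}$ with \eqref{d_AotA}, which rewrites as $\tilde{d}_{\tilde{\mathbb{A}}\ot_{T_p}\tilde{\mathbb{A}}}=\psi^{-1}(d_{\mathbb{A}\ot_A\mathbb{A}}\ot id_{kG})\psi$. Then
\begin{align*}
\tilde{d}_{\tilde{\mathbb{A}}\ot_{T_p}\tilde{\mathbb{A}}}\,\tilde{\Delta}
&=\psi^{-1}(d_{\mathbb{A}\ot_A\mathbb{A}}\ot id_{kG})(\Delta\ot id_{kG})
=\psi^{-1}\big((d_{\mathbb{A}\ot_A\mathbb{A}}\,\Delta)\ot id_{kG}\big),\\
\tilde{\Delta}\,\tilde{d}_{\tilde{\mathbb{A}}}
&=\psi^{-1}(\Delta\ot id_{kG})(d_{\mathbb{A}}\ot id_{kG})
=\psi^{-1}\big((\Delta\,d_{\mathbb{A}})\ot id_{kG}\big),
\end{align*}
and these coincide because $\Delta$ is a chain map, $d_{\mathbb{A}\ot_A\mathbb{A}}\,\Delta=\Delta\,d_{\mathbb{A}}$. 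For (iii), it is enough to evaluate in degree $0$ on the $T_p^e$-generator $\xi_01_G$: since $\Delta_0(\xi_0)=\xi_0\ot_A\xi_0$ and $\psi(\xi_01_G\ot_{T_p}\xi_01_G)=(\xi_0\ot_A\xi_0)1_G$ by \eqref{iso ATp}, we get $\tilde{\Delta}_0(\xi_01_G)=\xi_01_G\ot_{T_p}\xi_01_G$, and applying the augmentation $\tilde{\pi}\ot_{T_p}\tilde{\pi}$ gives $1\ot_{T_p}1$, which is the image of $1=\tilde{\pi}(\xi_01_G)$ under the canonical isomorphism; $T_p^e$-linearity then propagates this over all of $\tilde{\mathbb{A}}_0$.

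For (i), since $\psi^{-1}$ is an isomorphism of $T_p^e$-modules by \eqref{iso ATp}, it remains to see that $\Delta\ot id_{kG}$ is $T_p^e$-linear. This is the step that requires genuine checking: one must verify that $\Delta$ is not only $A^e$-linear but $\mathcal{D}$-linear, i.e.\ equivariant for the $G$-action on $\mathbb{A}$ fixed before \eqref{resol} (untwisted in even degrees, twisted by $\omega$ in odd degrees); then $T_p^e\ot_{\mathcal{D}}\Delta$ is $T_p^e$-linear and is identified with $\psi^{-1}(\Delta\ot id_{kG})$ under $\tilde{\mathbb{A}}\cong T_p^e\ot_{\mathcal{D}}\mathbb{A}$ and $\psi$. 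The $G$-equivariance is checked degree by degree on the generators $\xi_n$ from the explicit formulas \eqref{Deltan}, tracking the total $x$-exponents of each summand $x^a\xi_j\ot_A x^b\xi_\ell x^c$ against the parities of the indices so that the $\omega$-twists on the two sides balance. I expect this $\omega$-bookkeeping to be the main (and only nonformal) obstacle; once it is settled, the remainder is the purely formal transport of structure already carried out for $\tilde{h}$ and $\tilde{\phi}$.
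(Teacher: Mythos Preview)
Your proposal is correct, and your verification of (ii) is essentially identical to the paper's proof, which consists solely of that chain-map computation using \eqref{d_AotA} and the fact that $\Delta$ is a chain map. The paper does not explicitly address your points (i) and (iii); in particular it does not record the $G$-equivariance check for $\Delta$ that you rightly identify as the only nonformal ingredient, so your write-up is more complete than the paper's own argument while following the same route.
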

\begin{proof}
We need to check that $\tilde{\Delta}$ is a chain map. The following equations are straightforward by considering the fact that $\Delta$ is a chain map and \eqref{d_AotA}:
\begin{align*}
\tilde{d}_{\tilde{\mathbb{A}}\ot_{T_p}\tilde{\mathbb{A}}}\tilde{\Delta}&=\tilde{d}_{\tilde{\mathbb{A}}\ot_{T_p}\tilde{\mathbb{A}}}\psi^{-1}(\Delta\ot id_{kG})=\psi^{-1}(d_{\mathbb{A}\ot_A \mathbb{A}}\ot id_{kG})(\Delta\ot id_{kG})\\
&=\psi^{-1}(d_{\mathbb{A}\ot_A \mathbb{A}}\Delta\ot id_{kG})=\psi^{-1}(\Delta d_{\mathbb{A}}\ot id_{kG})=\psi^{-1}(\Delta\ot id_{kG})( d_{\mathbb{A}}\ot id_{kG})\\
&=\tilde{\Delta}\tilde{d}_{\tilde{\mathbb{A}}}.
\end{align*}
\end{proof}
Lemma \ref{lemma diagonal} allows us to compute the $T_p$-linear map $\tilde{\Delta}:\tilde{\mathbb{A}}_{i+1}\longrightarrow (\tilde{\mathbb{A}}\otimes_{T_p}\tilde{\mathbb{A}})_i$ as follows:
\begin{align*}
\tilde{\Delta}_0(\xi_0 1_G) &= \xi_01_G\ot_{T_p}\xi_01_G,\\
\tilde{\Delta}_1(\xi_11_G) &=\xi_11_G\ot_{T_p}\xi_0 1_G + \xi_01_G \ot_{T_p} \xi_11_G,\\
\tilde{\Delta}_{2n}(\xi_{2n}1_G) &= \sum_{i=0}^{n}\xi_{2i} 1_G\ot_{T_p}\xi_{2n-2i} 1_G \\
&+\sum_{i=0}^{n-1}\sum_{\substack{a+b+c \\ =p-2}}^{}x^a\xi_{2i+1} 1_G\ot_{T_p}x^b\xi_{2n-2i-1}x^c1_G, \text{ for }n\geq 1\\
\tilde{\Delta}_{2n+1}(\xi_{2n+1} 1_G) &= \sum_{i=0}^{2n+1}\xi_i 1_G\ot_{T_p} \xi_{2n+1-i} 1_G, \text{ for }n\geq 1.
\end{align*}

Before computing the bracket on Hochschild cohomology of $T_p$, we need to find a basis of $\text{Hom}_{T_p^e}(\tilde{\mathbb{A}}, T_p)$. In particular, we must find a basis of $\text{Hom}_{T_p^e}(A\ot T_p, T_p)$ as it is an invariant in each degree. 

It is known that $$\HH^*(T_p):=\text{Ext}^*_{T_p^{e}}(T_p,T_p)\cong\text{Ext}^*_{\mathcal{D}}(A,T_p)\cong \text{Ext}^*_{A^{e}}(A,T_p)^G.$$ 
The Eckmann-Shapiro Lemma (Lemma \ref{Eck}) and (\ref{iso Tp}) imply the first isomorphism and see \cite[Theorem 3.6.2]{HH} for the second isomorphism. 

Consider the following resolution 
\begin{equation}\label{2nd}
\text{Hom}_{A^e}(\mathbb{A},T_p)^G:0{\longrightarrow} \text{Hom}_{A^e}(A^e,T_p)^G{\longrightarrow} \text{Hom}_{A^e}(A^e,T_p)^G {\longrightarrow} \cdots
\end{equation}
where the action of $G$ on $\text{Hom}_{A^e}(A^e,T_p)^G$ is defined by 
\begin{equation}\label{action on T_p}
g\cdot f(a_1\ot a_2)={^g}f(^{g^{-1}}(a_1\ot a_2)).
\end{equation} 
This resolution is clearly isomorphic to 
\begin{equation}\label{1st}
0{\longrightarrow} T_p^{G}{\longrightarrow} T_p^{G} {\longrightarrow} T_p^{G} \longrightarrow \cdots
\end{equation}
with the correspondence 
\begin{equation}\label{corres}
f_t\mapsto t \text{ where } f_t(\xi_*)=t\text{ for all } t\in T_p.
\end{equation}
 We claim that $\text{Hom}_{T_p^e}(A\ot T_p, T_p)\cong T_p^G$. Suppose $x^ig^j \in T_p^{G}$. Then, we have $f_{x^ig^j}\in \text{Hom}_{A^e}(A^e,T_p)^G$ defined by $f_{x^ig^j}(x^k\ot x^l):=x^{k+l+i}g^j$ where $x^*\in A$. Now observe that, $f_{x^ig^j}\in \text{Hom}_{A^e}(A^e,T_p)^G$ is a $\mathcal{D}$-module homomorphism since
\begin{align*}
f_{x^ig^j}((x^k\xi_* x^l g)(a_1\ot a_2))&=f_{x^ig^j}((x^k\xi_* x^l 1_G)g(a_1\ot a_2))=(x^k\xi_* x^l 1_G)f_{x^ig^j}(g(a_1\ot a_2))\\
&=(x^k\xi_*x^l 1_G)gf_{x^ig^j}(a_1\ot a_2)=(x^k\xi_* x^l g)f_{x^ig^j}(a_1\ot a_2)
\end{align*}
where $x^k\xi_* x^l g\in \mathcal{D}, a_1\ot a_2\in A^e$. Moreover, if $f\in \text{Hom}_{\mathcal{D}}(A^e,T_p)$, then $f$ is $G$-invariant as 
$$g\cdot f(a_1\ot a_2)={^g}f(^{g^{-1}}(a_1\ot a_2))={^{(gg^{-1})}}f(a_1\ot a_2)=f(a_1\ot a_2)$$
where $g\in G, a_1\ot a_2\in A^e$. Hence, the isomorphism 
from $\text{Hom} _{A^e}(A^e,T_p)^G$ to $\text{Hom}_{\mathcal{D}}(A^e,T_p)$ is the identity, so that $f_{x^ig^j}$ is also in $\text{Hom}_{\mathcal{D}}(A^e,T_p)$. We next use the Eckmann-Shapiro lemma (Lemma \ref{Eck}) which implies that $\text{Ext}_{\mathcal{D}}^{*}(A,T_p)\cong \text{Ext}_{T_p^{e}}^{*}(T_p^{e}\ot_\mathcal{D}A,T_p)$ and the isomorphism is given by 
\begin{align*}
\sigma(f_{x^ig^j})(x^mg^s\ot x^ng^r\ot_{\mathcal{D}}x^k\ot x^l)&=x^mg^s\ot x^ng^rf_{x^ig^j}(x^k\ot x^l)=x^mg^s\ot x^ng^r(x^{k+l+i}g^j)\\
&=(x^mg^s)(x^{k+l+i}g^j)(x^ng^r)\\
&=((x^m(^{g^s}x^{k+l+i}))g^{s+j})(x^ng^r)\\
&=\omega^{s(k+l+i)}(x^{m+k+l+i}g^{s+j})(x^ng^r)\\
&=\omega^{s(k+l+i)}(x^{m+k+l+i}(^{g^{s+j}}x^n))g^{j+s+r}\\
&=\omega^{s(k+l+i+n)+jn}x^{i+k+l+m+n}g^{j+s+r}.
\end{align*}
Hence, $\sigma(f_{x^ig^j})$ is in $\text{Hom}_{T_p^{e}}(T_p^{e}\ot_{\mathcal{D}} A^e,T_p)$. Lastly, recall that $T_p^{e}\ot_{\mathcal{D}}A^e\cong A\ot T_p$ via $\kappa$ (\ref{iso Tp}); so that,
\begin{align*}
\kappa^*(\sigma(f_{x^ig^j}))(x^k\ot x^lg^r)=\sigma(f_{x^ig^j})((1_{T_p}\ot \xi_*g^r)\ot_{\mathcal{D}}x^k\ot x^l)=x^{i+k+l}g^{j+r}
\end{align*}
which implies $\kappa^{*}(\sigma(f_{x^ig^j}))\in \text{Hom}_{T_p^e}(A\ot T_p, T_p)$. For simplicity, we define $\tilde{f}_{x^ig^j}:=\kappa^{*}(\sigma(f_{x^ig^j}))$.

The action of $G$ on $T_p$ given by \eqref{action on T_p} and \eqref{corres} depends on degree. Since $T_p^{G}$ is spanned by $\{1,g,\cdots,g^{p-1}\}$ in even degrees and $\{x,xg,\cdots, xg^{p-1}\}$ in odd degrees \cite[Section 8.2]{Ng}, we have $\{\tilde{f}_1,\tilde{f}_g,\cdots, \tilde{f}_{g^{p-1}}\}$ in even degrees and $\{\tilde{f}_x,\tilde{f}_{xg},\cdots,\tilde{f}_{xg^{p-1}}\}$ in the odd degrees as a basis of $\text{Hom}_{T_p^e}(A\ot T_p, T_p)$.

We only calculate the bracket in degree 1 and 2 as before so we can extend it to higher degrees by the relation between cup product and the bracket. Since $A\ot T_p\cong A^e\ot kG$ as vector spaces, $\xi_i 1_G$ generates $A\ot T_p$ as a $T_p$-bimodule. Through the calculation, $id$ represents $id_{A\ot T_p}$ and $\ot$ represents $\ot_{T_p}$.

The circle product of two elements in degree one is
\begin{align*}
(\tilde{f}_{xg^i}\circ_{\tilde{\phi}}\tilde{f}_{xg^j})(\xi_11_G)&=\tilde{f}_{xg^i}\tilde{\phi}(id\otimes\tilde{f}_{xg^j}\ot id)\tilde{\Delta}^{(2)}(\xi_1 1_G)\\
&=\tilde{f}_{xg^i}\tilde{\phi}(id\ot \tilde{f}_{xg^j}\ot id)(\xi_01_G\ot\xi_01_G\ot\xi_11_G+\xi_01_G\ot\xi_11_G\ot\xi_01_G\\
&+\xi_11_G\ot\xi_01_G\ot\xi_01_G)\\
&=\tilde{f}_{xg^i}\tilde{\phi}(\xi_01_G\ot  x\xi_0g^j)=\tilde{f}_{xg^i}(\xi_1 g^j)=xg^{i+j}.
\end{align*}
Because of the symmetry, $(\tilde{f}_{xg^j}\circ_{\tilde{\phi}}\tilde{f}_{xg^i})(\xi_11_G)=xg^{i+j}$. Therefore $$[\tilde{f}_{xg^i},\tilde{f}_{xg^j}](\xi_11_G)=xg^{i+j}-(-1)^{0} xg^{i+j}=0.$$
The circle product of the elements of degrees 1 and 2:
\begin{align*}
(\tilde{f}_{xg^i}\circ_{\tilde{\phi}}\tilde{f}_{g^j})(\xi_21_G)&=\tilde{f}_{xg^i}\tilde{\phi}(id\ot\tilde{f}_{g^j}\ot id)\tilde{\Delta}^{(2)}(\xi_2 1_G)=\tilde{f}_{xg^i}\tilde{\phi}(id\ot \tilde{f}_{g^j}\ot id)\\
&(\xi_01_G\ot\xi_01_G\ot\xi_21_G+\xi_01_G\ot\xi_21_G\ot\xi_01_G\\
&+\xi_01_G\ot\sum_{\substack{a+b+c \\ =p-2}}(x^a\xi_11_G\ot x^b\xi_1x^c1_G)+\xi_21_G\ot\xi_01_G\ot\xi_01_G\\
&+\sum_{\substack{a+b+c \\ =p-2}}(x^a\xi_11_G\ot (x^b\xi_01_G\ot \xi_1x^c1_G+x^b\xi_11_G\ot\xi_0x^c1_G)))\\
&=\tilde{f}_{xg^i}\tilde{\phi}(\xi_01_G\ot\xi_0g^j)=0.
\end{align*}
And the circle product on the reverse order:
\begin{align*}
(\tilde{f}_{g^j}\circ_{\tilde{\phi}}\tilde{f}_{xg^i})(\xi_21_G)&=\tilde{f}_{g^j}\tilde{\phi}(id\ot\tilde{f}_{xg^i}\ot id)\tilde{\Delta}^{(2)}(\xi_2 1_G)=\tilde{f}_{g^j}\tilde{\phi}(id\ot \tilde{f}_{xg^i}\ot id)\\
&(\xi_01_G\ot\xi_01_G\ot\xi_21_G+\xi_01_G\ot\xi_21_G\ot\xi_01_G\\
&+\xi_01_G\ot\sum_{\substack{a+b+c \\ =p-2}}(x^a\xi_11_G\ot x^b\xi_1x^c1_G)+\xi_21_G\ot\xi_01_G\ot\xi_01_G\\
&+\sum_{\substack{a+b+c \\ =p-2}}(x^a\xi_11_G\ot (x^b\xi_01_G\ot \xi_1x^c1_G+x^b\xi_11_G\ot \xi_0x^c1_G)))\\
&=\tilde{f}_{g^j}\tilde{\phi}(\sum_{\substack{a+b+c \\ =p-2}}\omega^{i(b+c)}\xi_01_G\ot x^{a+b+1}\xi_1x^cg^i+\omega^{ic}x^a\xi_11_G\ot x^{b+1}\xi_0x^cg^i)\\
&=\tilde{f}_{g^j}(\sum_{\substack{a+b+c \\ =p-2}}\omega^{i(b+c)}\delta_{a+b+1,p-1}x^c\xi_2g^i-\omega^{ic}\delta_{b+1,p-1}x^{a+c}\xi_2 g^i)\\
&=\tilde{f}_{g^j}(\sum_{b=0}^{p-2}\omega^{ib}\xi_2g^i)-\tilde{f}_{g^j}(\xi_2g^i)\\
&=\left\{\begin{array}{lr}
        (p-2)g^j, & \text{for } i=0\\
        -(\omega^{-i}+1)g^{i+j}, & \text{for } i\neq 0
        \end{array}\right..
\end{align*}
Therefore, we obtain
$$[\tilde{f}_{xg^i},\tilde{f}_{g^j}]=\left\{\begin{array}{lr}
        -(p-2)g^j, & \text{for } i=0\\
        (\omega^{-i}+1)g^{i+j}, & \text{for } i\neq 0
        \end{array}\right..$$
Lastly, the bracket of the elements of degrees 2 and 2:\\
\begin{align*}
(\tilde{f}_{g^i}\circ_{\tilde{\phi}}\tilde{f}_{g^j})(\xi_31_G)&=\tilde{f}_{g^i}\tilde{\phi}(id\ot\tilde{f}_{g^j}\ot id)\tilde{\Delta}^{(2)}(\xi_31_G)=\tilde{f}_{g^i}\tilde{\phi}(id\ot\tilde{f}_{g^j}\ot id)\\
&(\xi_01_G\ot\xi_01_G\ot\xi_31_G+\xi_01_G\ot\xi_11_G\ot\xi_21_G+\xi_01_G\ot\xi_21_G\ot\xi_11_G\\
&+\xi_01_G\ot\xi_31_G\ot\xi_01_G+\xi_11_G\ot\xi_21_G\ot\xi_01_G+\xi_11_G\ot\xi_01_G\ot\xi_21_G\\
&+\xi_21_G\ot\xi_11_G\ot\xi_01_G+\xi_21_G\ot\xi_01_G\ot\xi_11_G+\xi_31_G\ot\xi_01_G\ot\xi_01_G)\\
&=\tilde{f}_{g^i}\tilde{\phi}(\xi_01_G\ot\xi_1g^j+\xi_11_G\ot\xi_0g^j)=0
\end{align*}
and by symmetry
$(\tilde{f}_{g^j}\circ_{\tilde{\phi}}\tilde{f}_{g^i})(\xi_31_G)=0$. Therefore, we have 
$[\tilde{f}_{g^i},\tilde{f}_{g^j}]=0$.
As a consequence, the bracket for the elements of degree 1 and 2 are 
$$[\tilde{f}_{xg^i},\tilde{f}_{xg^j}]=0,[\tilde{f}_{xg^i},\tilde{f}_{g^j}]=\left\{\begin{array}{lr}
        -(p-2)g^j, & \text{for } i=0\\
        (\omega^{-i}+1)g^{i+j}, & \text{for } i\neq 0
        \end{array}\right.,[\tilde{f}_{g^i},\tilde{f}_{g^j}]=0.$$
By the identity (\ref{cupbrac}), brackets in higher degrees can be determined, since the
Hochschild cohomology is generated as an algebra under cup product in degrees 1
and 2.

Hopf algebra cohomology of $T_p$ and Hochschild cohomology of $T_p$ were calculated before by V. C. Nguyen \cite[Section 8]{Ng} as the Hopf algebra cohomology
\[
\text{H}^n(T_p,k)=
\begin{cases}
k & \text{if $n$ is even,} \\
0 & \text{if $n$ is odd,} 
\end{cases}
\]
and the Hochschild cohomology
\[
\text{HH}^n(T_p,k)=
\begin{cases}
k & \text{if $n$ is even,} \\
Span_k\{x\} & \text{if $n$ is odd.}
\end{cases}
\]

It is known that for any Hopf algebra with bijective antipode, the Hopf algebra cohomology can be embedded into the Hochschild cohomology \cite[Theorem 9.4.5 and Corollary 9.4.7]{HH}. Since any finite dimensional Hopf algebra has a bijective antipode, the Taft algebra $T_p$ is also a Hopf algebra with a bijective antipode. The embedding of $\text{H}^n(T_p,k)$ into $\text{HH}^n(T_p,T_p)$ turns out to be the map that is identity in even degrees and zero on odd degrees. Then, the corresponding bracket in Hopf algebra cohomology is
$$[\tilde{f}_{g^i},\tilde{f}_{g^j}]=0,$$
so that, the bracket on Hopf algebra cohomology for the elements of all degrees is 0 by the identity (\ref{cupbrac}). 

This is the first example of the Gerstenhaber bracket on the Hopf algebra cohomology of a nonquasi-triangular Hopf algebra and our calculation shows that the bracket on Hopf algebra cohomology of a Taft algebra is zero as it is on the Hopf algebra cohomology of any quasi-triangular algebra. A natural question that arises whether the bracket structure on the Hopf algebra cohomology is always trivial. In the next section, we explore a general expression for the bracket on the Hopf algebra cohomology that may help us to approach this question with a more theoretical perspective in the future researches.

\section{Gerstenhaber bracket for Hopf algebras}

In this section, we want to explore an expression for Gerstenhaber bracket on a Hopf algebra $A$ with a bijective antipode $S$. 

We give the following lemma which helps us to define the Gerstenhaber bracket on an equivalent resolution to the bar resolution of $A$ as an $A$-bimodule.
 \begin{lemma}
 	Let $A$ be a Hopf algebra with bijective antipode. Let $P_{\bullet}$ be the bar resolution of $k$ as a left $A$-module:
 	\begin{equation*}
 	P_{\bullet}:\cdots \stackrel{d_3}{\longrightarrow} A^{\otimes 3} \stackrel{d_2}{\longrightarrow} A^{\otimes 2} \stackrel{d_1}{\longrightarrow} A \stackrel{\varepsilon}\longrightarrow k\longrightarrow 0,
 	\end{equation*}
 	with differentials $$d_n(a_0\otimes a_1\otimes\cdots \otimes a_{n})=\sum_{i=0}^{n-1}(-1)^ia_0\otimes a_1\otimes \cdots \otimes a_ia_{i+1}\otimes\cdots \otimes a_n+(-1)^n\varepsilon(a_n)a_0\ot \cdots\ot a_{n-1}$$
 	Then $X_{\bullet}=A^e\ot_AP_{\bullet}$ is equivalent to the bar resolution of $A$ as an  $A$-bimodule.
 \end{lemma}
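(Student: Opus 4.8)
The plan is to show that $X_{\bullet}\to A$ is a free $A^e$-module resolution of $A$ and then to compare it with the bimodule bar resolution $B(A)$ via the usual comparison theorem; an explicit isomorphism of complexes will fall out as well.

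First I would pin down the module structures that make $X_{\bullet}=A^e\otimes_A P_{\bullet}$ meaningful. Equip $A^e=A\otimes A^{op}$ with the right $A$-module structure coming from the algebra homomorphism $\rho\colon A\to A^e$, $\rho(a)=a_{(1)}\otimes S(a_{(2)})$ (which is multiplicative because $S$ is an algebra anti-homomorphism), i.e.\ $(x\otimes y)\cdot a=xa_{(1)}\otimes S(a_{(2)})y$; the tensor product $A^e\otimes_A P_n$ is then taken with respect to this action and the left $A$-action on $P_n=A^{\otimes(n+1)}$ by multiplication in the first tensor factor. Each $X_n=A^e\otimes_A A^{\otimes(n+1)}$ is a free $A^e$-module, since the left $A$-module $P_n$ is free (choosing a basis of $A^{\otimes n}$ in the last $n$ slots gives $P_n\cong A^{(\Lambda)}$), so that $X_n\cong(A^e\otimes_A A)^{(\Lambda)}\cong(A^e)^{(\Lambda)}$.

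Next I would identify the homology of $X_{\bullet}$. As $A^e\otimes_A(-)$ is right exact and the cokernel of $d_1\colon P_1\to P_0$ is $k$ (the augmentation of $P_{\bullet}$), we get $H_0(X_{\bullet})\cong A^e\otimes_A k$; the map $(x\otimes y)\otimes_A 1\mapsto xy$ is an isomorphism of $A$-bimodules from $A^e\otimes_A k$ onto the regular bimodule $A$ with inverse $a\mapsto(a\otimes 1)\otimes_A 1$, and under it the augmentation $X_0=A^e\otimes_A A\cong A^e\twoheadrightarrow A^e\otimes_A k\cong A$ becomes the multiplication map $A^e\to A$, exactly as for $B(A)$. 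For the vanishing of $H_n(X_{\bullet})$ in positive degrees I would use the one nontrivial Hopf-algebraic ingredient: since $S$ is bijective, $A^e$ is free, hence flat, as a right $A$-module under $\rho$. This is precisely the resolution-level statement underlying the embedding of Hopf algebra cohomology into Hochschild cohomology recalled in the introduction, and I would cite \cite{HH} for it. Flatness makes $A^e\otimes_A(-)$ exact, so it takes the exact complex $P_{\bullet}\to k\to 0$ to the exact complex $X_{\bullet}\to A\to 0$; combined with the previous step, $X_{\bullet}\to A$ is a free $A^e$-module resolution of $A$.

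It then follows formally that $X_{\bullet}$ is equivalent to $B(A)$: both are projective $A^e$-module resolutions of $A$, so the comparison theorem yields chain maps $X_{\bullet}\rightleftarrows B(A)$ lifting $\mathrm{id}_A$ that are mutually inverse up to chain homotopy, which already suffices for transporting the Gerstenhaber structure. Moreover these can be taken to be honest mutually inverse isomorphisms of complexes of $A$-bimodules; in low degrees one has $\alpha_0=\mathrm{id}_{A^e}$ and $\alpha_1\big((1\otimes 1)\otimes_A(1\otimes a)\big)=-a_{(1)}\otimes S(a_{(2)})\otimes 1$, for which a direct check gives $d_1^{B(A)}\alpha_1=\alpha_0 d_1^{X}$, and the higher $\alpha_n$ are obtained by lifting. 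The only step that is not routine homological algebra is the freeness of $A^e$ over $A$ via $\rho$, and that is exactly the point at which bijectivity of the antipode is used.
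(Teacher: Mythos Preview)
Your argument is correct for establishing that $X_\bullet$ and $B(A)$ are chain-homotopy equivalent $A^e$-projective resolutions of $A$: freeness of each $X_n$, the identification $A^e\otimes_A k\cong A$, and flatness of $A^e$ over $A$ (from bijectivity of $S$) are exactly the right ingredients, and the comparison theorem then finishes the job. The paper, however, takes a different and more explicit route: it writes down a closed formula for an isomorphism of complexes
\[
\theta_n\bigl((a\otimes b)\otimes_A(1\otimes c^1\otimes\cdots\otimes c^n)\bigr)=\sum a\otimes c^1_{1}\otimes\cdots\otimes c^n_{1}\otimes S(c^1_{2}\cdots c^n_{2})\,b
\]
together with its inverse $\psi_n$, and checks directly that $\theta$ is a chain map. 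What the paper's approach buys is precisely those explicit maps $\theta,\psi$, which are immediately used in the remainder of Section~5 to transport the bar-resolution circle product to $X_\bullet$ and then to $P_\bullet$; your abstract equivalence would guarantee a well-defined bracket on cohomology but would not by itself produce the concrete formula the paper is after. Conversely, your argument makes transparent \emph{why} the two resolutions must be equivalent, with bijectivity of $S$ entering only through flatness.

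One small caveat: your last paragraph asserts that the comparison maps ``can be taken to be honest mutually inverse isomorphisms,'' but the lifting procedure you invoke does not by itself yield isomorphisms---it only gives homotopy inverses. The statement is true (as the paper's explicit $\theta,\psi$ demonstrate), but to justify it you would need to exhibit such maps in all degrees, not merely lift. Your degree-one formula $\alpha_1\bigl((1\otimes1)\otimes_A(1\otimes a)\bigr)=-a_{(1)}\otimes S(a_{(2)})\otimes 1$ is a valid chain map, but it differs from the paper's $\theta_1$, which sends the same element to $1\otimes a_{(1)}\otimes S(a_{(2)})$; both lift $\mathrm{id}_A$, and only the latter extends to the closed-form isomorphism above.
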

 \begin{proof}
Since $S$ is bijective \cite[Lemma 9.2.9]{HH}, $A^e$ is projective as a right $A$-module. Also there is an $A^e$-module isomorphism $\rho:A\to A^e\ot_Ak$ defined by $\rho(a)=a\ot 1\ot 1$ for all $a\in A$ \cite[Lemma 9.4.2]{HH}. 

For each $n$, define $\theta_n:X_n\to A^{\ot (n+2)}$ by
$$\theta_n((a\ot b)\ot_A(1\ot c^1\ot c^2\ot \cdots \ot c^n))=\sum a\ot c_1^1\ot c_1^2\ot\cdots\ot c_1^n\ot S(c_2^1c_2^2\cdots c_2^n)b$$
for all $a,b,c^1,\cdots c^n\in A.$

Now, we show that $\theta$ is a chain map: 
\begin{flalign*}
\theta&_{n-1}d_n((a\ot b)\ot_A(1\ot c^1\ot c^2\ot \cdots \ot c^n))\\
=&\theta_{n-1}((a\ot b)\ot_A(c^1\ot c^2\ot \cdots \ot c^n)\\
&+\sum_{i=1}^{n-1}(-1)^i(a\ot b)\ot_A(1\ot c^1\ot c^2\ot \cdots\ot c^ic^{i+1}\ot\cdots  \ot c^n)\\
&+(-1)^n(a\ot b)\ot_A(\varepsilon(c^n)\ot c^1\ot c^2\ot \cdots \ot c^{n-1}))\\
=&\theta_{n-1}(\sum (ac_1^1\ot S(c_2^1) b)\ot_A(1\ot c^2\ot \cdots \ot c^n)\\
&+\sum_{i=1}^{n-1}(-1)^i(a\ot b)\ot_A(1\ot c^1\ot c^2\ot \cdots\ot c^ic^{i+1}\ot\cdots  \ot c^n)\\
&+(-1)^n(\varepsilon(c^n)a\ot b)\ot_A(1 \ot c^1\ot c^2\ot \cdots \ot c^{n-1}))\\
=&\sum ac_1^1\ot c_1^2\ot \cdots \ot c_1^n\ot S(c_2^2\cdots c_2^n) S(c_2^1)b\\
&+\sum_{i=1}^{n-1}(-1)^i\sum a\ot c_1^1\ot \cdots \ot c_1^ic_1^{i+1}\ot \cdots \ot c_1^n\ot S(c_2^1\cdots c_2^n)b\\
&+\sum (-1)^na\ot c_1^1\ot \cdots \ot c_1^{n-1}\ot \varepsilon(c^n)S(c_2^1\cdots c_2^{n-1})b
\end{flalign*}
and
\begin{align*}
d_n&\theta_n((a\ot b)\ot_A(1\ot c^1\ot c^2\ot \cdots \ot c^n))\\
=&d_n(\sum a\ot c_1^1\ot c_1^2\ot\cdots\ot c_1^n\ot S(c_2^1c_2^2\cdots c_2^n)b)\\
=&\sum ac_1^1\ot c_1^2\ot \cdots \ot c_1^n\ot S(c_2^1c_2^2\cdots c_2^n)b\\
&+\sum\sum_{i=1}^{n-1}(-1)^i a\ot c_1^1\ot \cdots \ot c_1^ic_1^{i+1}\ot \cdots \ot c_1^n\ot S(c_2^1\cdots c_2^n)b\\
&+\sum (-1)^na\ot c_1^1\ot \cdots \ot c_1^{n-1}\ot c_1^nS(c_2^1\cdots c_2^{n})b.
\end{align*}

Since $S$ is an algebra anti-homomorphism that is convolution inverse to the identity map,
$$\sum c_1^nS(c_2^1\cdots c_2^n)=\sum c_1^nS(c_2^n)S(c_2^{n-1})\cdots S(c_2^1)=\sum \varepsilon(c^n)S(c_2^1\cdots c_2^{n-1})$$
and
$$S(c_2^2\cdots c_2^{n})S(c_2^1)=S(c_2^1c_2^2\cdots c_2^{n})$$
so that the two expressions are equal which follows $\theta$ is a chain map.

Lastly, one can see that the $A^e$-module homomorphism $$\psi_n(a\ot c^1\ot c^2\ot \cdots c^n\ot b)=\sum (a\ot c_2^1c_2^2\cdots c_2^nb)\ot_A (1\ot c_1^1\ot c_1^2\ot \cdots \ot c_1^n)$$
is the inverse of $\theta_n$ by using the property that $S$ is an algebra anti-homomorphism that is convolution inverse to the identity map.
 \end{proof}
 
  Let $f_x\in$Hom$_{A^e}(X_m,A)$ and $g_x\in$Hom$_{A^e}(X_n,A)$. Then we define the $X$-bracket $[f_x,g_x]_X\in$Hom$_{A^e}(X_{m+n-1},A)$ to be a composition $X\stackrel{\theta}\longrightarrow B(A)\xrightarrow{[\psi^{*}f_x.\psi^{*}g_x]} A$; so that, we have $$[f_x,g_x]_X=[\psi^{*}f_x,\psi^{*}g_x]\theta=(\psi^{*}f_x\circ \psi^{*}g_x)\theta-(-1)^{(m-1)(n-1)}(\psi^{*}g_x\circ \psi^{*}f_x)\theta$$
 where
 \begin{align*}
 (\psi&^{*}f_x\circ \psi^{*}g_x)\theta_{m+n-1}((a\ot b)\ot_A 1\ot c^1\otimes \cdots \otimes c^{m+n-1})\\ 
 =&(\psi^{*}f_x\circ \psi^{*}g_x)(\sum a\ot c_1^1\ot c_1^2\ot \cdots \ot c_1^{m+n-1}\ot S(c_2^1c_2^2\cdots c_2^{m+n-1})b)\\
 =&\sum \sum_{i=1}^{m}(-1)^{(n-1)(i-1)}f_x\psi_m(a\otimes c_1^1\otimes\cdots \ot  c_1^{i-1}\otimes g_x\psi_n(1\ot c_1^{i}\otimes \cdots \otimes c_1^{i+n-1}\ot 1)\\
 &\otimes c_1^{i+n}\otimes \cdots \otimes c_1^{m+n-1}\ot S(c_2^1c_2^2\cdots c_2^{m+n-1})b)\\
 =&\sum \sum_{i=1}^{m}(-1)^{(n-1)(i-1)}f_x\psi_m(a\otimes c_1^1\otimes\cdots \ot  c_1^{i-1}\\
 & \otimes\sum g_x(1\ot c_{2}^{i}c_2^{i+1}\cdots c_{2}^{i+n-1}\ot_A 1\ot c_{1}^{i}\ot c_{1}^{i+1}\ot \cdots \ot c_{1}^{i+n-1} )\\
 &\otimes c_1^{i+n}\otimes \cdots \otimes c_1^{m+n-1}\ot S(c_2^1\cdots c_2^{i-1}c_3^{i}\cdots c_3^{i+n-1}c_2^{i+n} \cdots  c_2^{m+n-1})b)\\
 =&\sum \sum_{i=1}^{m}(-1)^{(n-1)(i-1)}f_x(a\otimes  c_{2}^{1}c_{2}^{2}\cdots c_{2}^{i-1}c_{2}^{*} c_{2}^{i+n}\cdots c_{2}^{m+n-1}S(c_3^1c_3^2\cdots c_3^{m+n-1})b\\
 & \ot_A 1\ot c_{1}^{1}\ot c_{1}^{2}\ot \cdots \ot c_{1}^{i-1}\ot c_{1}^{*}\ot  c_{1}^{i+n}\ot \cdots \ot c_{1}^{m+n-1})\\
 \end{align*}
 
 where 
 \begin{align*}
 \Delta(c)&=\sum c_1\ot c_2, \Delta^{(2)}(c)=\sum c_1\ot c_2\ot c_3,\Delta(c^*)=\sum c_{1}^*\ot c_{2}^*\text{ and }\\
 c^*&=\sum g_x(1\ot c_{2}^{i}c_2^{i+1}\cdots c_{2}^{i+n-1}\ot_A 1\ot c_{1}^{i}\ot c_{1}^{i+1}\ot \cdots \ot c_{1}^{i+n-1} ).
 \end{align*}
 
 This is the general expression of the Gerstenhaber bracket on Hochschild cohomology of $A$. Next, we start with the following theorem \cite[Theorem 9.4.5]{HH} to construct an embedding from H$^*(A,k)$ into HH$^*(A)$.
 \begin{theo}
 Let $A$ be a Hopf algebra over k with bijective antipode. Then $$\HH^*(A)\cong \coh^*(A,A^{ad}).$$
 \end{theo}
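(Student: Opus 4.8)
The plan is to compute $\HH^*(A)=\Ext^*_{A^e}(A,A)$ with the resolution furnished by the preceding lemma and then to transport the answer across a change-of-rings (tensor--hom) adjunction. By that lemma, $X_\bullet=A^e\ot_A P_\bullet\cong B(A)$ as complexes of $A$-bimodules; in particular $X_\bullet$ is a free $A^e$-resolution of $A$ (bijectivity of $S$ is what makes this work, via \cite[Lemma~9.2.9]{HH}), so $\HH^*(A)=\coh^*\big(\Hom{A^e}{X_\bullet}{A}\big)$.

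Next I would make the module structures on $A^e$ explicit: the left $A^e$-structure is multiplication, while the right $A$-structure is the one of \cite[Lemma~9.4.2]{HH}, induced by an algebra homomorphism $\lambda\colon A\to A^e=A\ot A^{op}$ assembled from $\Delta$ and $S$; with the usual conventions $\lambda(a)=\sum a_1\ot S(a_2)$, and one checks this is multiplicative precisely because $S$ is an algebra anti-homomorphism. With these structures, the adjunction between $A^e\ot_A(-)$ and restriction of scalars along $\lambda$ provides a natural isomorphism of cochain complexes
\[
\Hom{A^e}{A^e\ot_A P_\bullet}{A}\ \cong\ \Hom{A}{P_\bullet}{\mathrm{Res}_\lambda A},
\]
since $\Hom{A^e}{A^e}{A}\cong A$ carrying the $A$-action pulled back along $\lambda$. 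It then remains to identify $\mathrm{Res}_\lambda A$: since $A^e=A\ot A^{op}$ acts on $A$ by $(x\ot y)\cdot c=xcy$, restricting along $\lambda$ turns this into $a\cdot c=\sum a_1 c\,S(a_2)$, which is exactly the adjoint action, so $\mathrm{Res}_\lambda A=A^{ad}$. As $P_\bullet$ is a projective resolution of $k$ over $A$, passing to cohomology gives
\[
\HH^*(A)\ \cong\ \coh^*\big(\Hom{A}{P_\bullet}{A^{ad}}\big)=\Ext^*_A(k,A^{ad})=\coh^*(A,A^{ad}),
\]
as asserted.

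The step I expect to demand the most care is the bookkeeping around $\lambda$: checking that $\lambda$ genuinely lands in $A\ot A^{op}$ as an algebra map (the reversed product in the second factor must absorb the anti-multiplicativity of $S$), that the right $A$-action it defines on $A^e$ is the same one for which $A^e\ot_A k\cong A$ and for which the preceding lemma applies, and that the Hom--tensor adjunction is natural in the complex so the identification passes to cohomology. Each of these is routine once the conventions are pinned down; alternatively, the statement is exactly \cite[Theorem~9.4.5]{HH} and may be quoted directly.
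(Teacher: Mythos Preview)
Your proposal is correct and matches the paper's treatment. The paper does not give a self-contained proof of this theorem---it is quoted as \cite[Theorem~9.4.5]{HH}---but immediately afterwards it spells out the explicit isomorphism exactly along your lines: it states and proves the Eckmann--Shapiro lemma (which is precisely the tensor--hom adjunction you invoke, packaged at the level of $\Ext$), applies it with $A\hookrightarrow A^e$ via the map you call $\lambda$, and combines this with $A\cong A^e\ot_A k$ from \cite[Lemma~9.4.2]{HH} to obtain $\Ext^n_{A^e}(A,A)\cong\Ext^n_A(k,A^{ad})$. Your identification $\mathrm{Res}_\lambda A=A^{ad}$ is exactly what the paper uses (implicitly) when it writes $M=A$ on the $A$-side as $A^{ad}$, so there is no substantive difference in approach.
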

  In this theorem $A^{ad}$ is an $A$-module $A$ under left adjoint action, given by \\
  $a\cdot b=\sum a_1bS(a_2)$ for all $a,b\in A$. To find explicit isomorphism between HH$^*(A)$ and H$^*(A,A^{ad})$, we give the Eckmann-Shapiro lemma.
 \begin{lemma}[Eckmann-Shapiro]\label{Eck}
 Let $A$ be a ring and let $B$ be a subring of $A$ such that A is projective as a right $B$-module. Let $M$ be an $A$-module and $N$ be a $B$-module. Then $$\emph{Ext}_B^n(N,M)\cong \emph{Ext}_A^n(A\ot_BN,M). $$ 
 \end{lemma}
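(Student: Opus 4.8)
The plan is to exploit the standard adjunction between the induction functor $A\ot_B-$ and the restriction functor, with the projectivity hypothesis entering only to guarantee exactness of induction. First I would fix a projective resolution $P_\bullet\to N$ of $N$ in the category of $B$-modules; by definition $\Ext_B^n(N,M)$ is then the cohomology of the cochain complex $\Hom{B}{P_\bullet}{M}$, where $M$ is viewed as a $B$-module by restriction along the inclusion $B\hookrightarrow A$.

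Next I would apply the functor $A\ot_B-$ to this resolution. Because $A$ is projective, hence flat, as a right $B$-module, the functor $A\ot_B-$ is exact, so that $A\ot_B P_\bullet\to A\ot_B N$ remains exact and is therefore a resolution of $A\ot_B N$. Moreover each $A\ot_B P_i$ is projective as an $A$-module: induction carries projectives to projectives because it is left adjoint to the restriction functor, and restriction is exact (this step does not even require the hypothesis on $A$). Consequently $A\ot_B P_\bullet$ is a projective $A$-resolution of $A\ot_B N$, and $\Ext_A^n(A\ot_B N,M)$ may be computed as the cohomology of $\Hom{A}{A\ot_B P_\bullet}{M}$.

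It then remains to identify the two cochain complexes. The tensor-hom adjunction furnishes a natural isomorphism $\Hom{A}{A\ot_B P_i}{M}\cong \Hom{B}{P_i}{M}$, sending an $A$-linear map $\varphi$ to $p\mapsto\varphi(1\ot_B p)$, with inverse $\psi\mapsto\bigl(a\ot_B p\mapsto a\,\psi(p)\bigr)$. I would verify that this isomorphism is natural in $P_i$, so that it commutes with the differentials induced by those of $P_\bullet$ and assembles into an isomorphism of cochain complexes $\Hom{A}{A\ot_B P_\bullet}{M}\cong \Hom{B}{P_\bullet}{M}$. Taking cohomology yields the desired isomorphism $\Ext_A^n(A\ot_B N,M)\cong\Ext_B^n(N,M)$, and the independence of $\Ext$ from the chosen resolution shows it is canonical.

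The main obstacle is the pair of structural facts that induction is exact and preserves projectivity: the first is precisely where the assumption that $A$ is projective as a right $B$-module is used, and without it the complex $A\ot_B P_\bullet$ need not remain exact. The residual verifications—naturality of the adjunction and its compatibility with the differentials so that the identification descends to cohomology—are routine.
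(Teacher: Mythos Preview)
Your argument is correct and follows essentially the same route as the paper: start from a $B$-projective resolution $P_\bullet\to N$, induce up to obtain an $A$-projective resolution $A\ot_B P_\bullet\to A\ot_B N$, and then identify the two Hom complexes via the explicit adjunction maps $\varphi\mapsto(p\mapsto\varphi(1\ot_B p))$ and $\psi\mapsto(a\ot_B p\mapsto a\psi(p))$, which are precisely the maps $\tau$ and $\sigma$ that the paper writes down. If anything, you are slightly more explicit than the paper about why $A\ot_B P_\bullet$ remains exact (flatness from projectivity) and why each $A\ot_B P_i$ is projective (left adjoint to an exact functor), points the paper asserts without justification.
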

 
 \begin{proof}
 Let $P_{\bullet}\to N$ be a $B$ projective resolution of $N$. Then $A\ot_BP_n$ is projective as A-module so that $A\ot_BP_{\bullet}\to A\ot_BN$ is a projective resolution of $A\ot_BN$ as an $A$-module. Let $$\sigma: \text{Hom}_B(P_n,M)\to \text{Hom}_A(A\ot_BP_n,M) \text{ defined by } \sigma(f)(a\ot_B p)=af(p),$$
$$\tau: \text{Hom}_A(A\ot_BP_n,M)\to \text{Hom}_B(P_n,M) \text{ defined by } \tau(g)(p)=g(1\ot_B p)$$ 
where $a\in A,p\in P_n,f\in \text{Hom}_B(P_n,M), g\in \text{Hom}_A(A\ot_BP_n,M).$ Since $\sigma$ and $\tau$ are inverse of each other and they are homomorphisms, $\text{Hom}_A(A\ot_BP_n,M)\cong \text{Hom}_B(P_n,M)$.
\end{proof}
If we replace $A$ with $A^e$, $B$ with $A$ and take $M=A,N=k$ in the Eckmann-Shapiro lemma, we have the isomorphism Ext$_{A^e}^n(A^e\ot_Ak,A)\cong$Ext$_A^n(k,A^{ad})$. We also know that $A\cong A^e\ot_A k$ \cite[Lemma 9.4.2]{HH} and the isomorphism is given by $\rho(a)=a\ot 1\ot 1$ for all $a\in A$. Therefore Ext$_{A^e}^n(A,A)\cong$Ext$_{A^e}^n(A^e\ot_Ak,A)\cong$Ext$_A^n(k,A^{ad})$.
 
 We already have the Gerstenhaber bracket $[,]_X$ on Ext$_{A^e}^n(A^e\ot_Ak,A)$. Hence we can use the isomorphisms $\sigma$ and $\tau$ in Eckmann-Shapiro Lemma and find the bracket expression on H$^*(A,A^{ad})$. Now let $\tilde{f}\in$Hom$_{A}(P_m,A^{ad})$ and $\tilde{g}\in$Hom$_{A}(P_n,A^{ad})$. Then $[\tilde{f},\tilde{g}]_{P}\in$Hom$_{A}(P_{m+n-1},A^{ad})$ and we have 
 \begin{align*}
 [\tilde{f},\tilde{g}]_P&=\tau[\sigma(\tilde{f}),\sigma(\tilde{g})]_X\\
 &=\tau((\psi^*(\sigma(\tilde{f}))\circ\psi^*(\sigma(\tilde{g})))\theta)-(-1)^{(m-1)(n-1)}\tau((\psi^*(\sigma(\tilde{g}))\circ \psi^*(\sigma(\tilde{f})))\theta).
 \end{align*}
 For simplification we define $$\tilde{f}\circ_{P}\tilde{g}:=\tau((\psi^*(\sigma(\tilde{f}))\circ\psi^*(\sigma(\tilde{g})))\theta).$$
Then by using previous circle product formula we obtain:
 \begin{align*}
 \tilde{f}&\circ_{P}\tilde{g}(1\ot c^1\ot c^2\ot\cdots\ot c^{m+n-1})\\
 =&\tau((\psi^*(\sigma(\tilde{f}))\circ\psi^*(\sigma(\tilde{g})))\theta)(1\ot c^1\ot c^2\ot\cdots\ot c^{m+n-1})\\
 =&(\psi^*(\sigma(\tilde{f}))\circ\psi^*(\sigma(\tilde{g})))\theta((1\ot 1) \ot_A 1\ot c^1\ot c^2\ot\cdots\ot c^{m+n-1})\\
 =&\sum \sum_{i=1}^{m}(-1)^{(n-1)(i-1)}\sigma(\tilde{f})(1\otimes  c_{2}^{1}c_{2}^{2}\cdots c_{2}^{i-1}c_{2}^{*} c_{2}^{i+n}\cdots c_{2}^{m+n-1}S(c_3^1c_3^2\cdots c_3^{m+n-1})\\
 &\ot_A 1\ot c_{1}^{1}\ot c_{1}^{2}\ot \cdots \ot c_{1}^{i-1}\ot c_{1}^{*}\ot  c_{1}^{i+n}\ot \cdots \ot c_{1}^{m+n-1})\\
 =&\sum \sum_{i=1}^{m}(-1)^{(n-1)(i-1)}\tilde{f}(1\ot c_{1}^{1}\ot c_{1}^{2}\ot \cdots \ot c_{1}^{i-1}\ot c_{1}^{*}\ot  c_{1}^{i+n}\ot \cdots \ot c_{1}^{m+n-1})\\
 &c_{2}^{1}c_{2}^{2}\cdots c_{2}^{i-1}c_{2}^{*} c_{2}^{i+n}\cdots c_{2}^{m+n-1}S(c_3^1c_3^2\cdots c_3^{m+n-1}))\\
 \end{align*}
 with $\Delta(c^*)=\sum c_{1}^*\ot c_{2}^*$ and 
 \begin{align*}
c^*&=\sum \sigma(\tilde{g})(1\ot c_{2}^{i}c_2^{i+1}\cdots c_{2}^{i+n-1}\ot_A 1\ot c_{1}^{i}\ot c_{1}^{i+1}\ot \cdots \ot c_{1}^{i+n-1} )\\
&=\sum (1\ot c_{2}^{i}c_2^{i+1}\cdots c_{2}^{i+n-1})\tilde{g}(1\ot c_{1}^{i}\ot c_{1}^{i+1}\ot \cdots \ot c_{1}^{i+n-1})\\
&=\sum\tilde{g}(1\ot c_{1}^i\ot c_{1}^{i+1}\ot \cdots \ot c_{1}^{i+n-1})c_{2}^{i}c_2^{i+1}\cdots c_{2}^{i+n-1}.
 \end{align*} 
 
 We now have the Lie bracket $[,]_P$ on H$^*(A,A^{ad})$. Next, we embed H$^*(A,k)$ into H$^*(A,A^{ad})$ \cite[Corollary 9.4.7]{HH} via the unit map $$\eta_*:\text{Hom}_A(P_{\bullet},k)\to \text{Hom}_A(P_{\bullet},A^{ad}).$$ Let $f\in \text{Hom}_A(P_m,k)$ and $g\in \text{Hom}_A(P_n,k)$. Then by using counit map $$\varepsilon_*:\text{Hom}_A(P_{\bullet},A) \to \text{Hom}_A(P_{\bullet},k),$$ $\eta_*$ and bracket on H$^*(A,A^{ad})$, we derive the formula for $[f,g]\in \text{Hom}_A(P_{m+n-1},k)$: 
 $$[f,g]=\varepsilon_*[\eta_*(f),\eta_*(g)]_P=\varepsilon_*(\eta_*(f)\circ_P\eta_*(g))-(-1)^{(m-1)(n-1)}\varepsilon_*(\eta_*(g)\circ_P\eta_*(f))$$
 where
 \begin{align*}
 \varepsilon_*&((\eta_*(f)\circ_P\eta_*(g))(1\ot c^1\ot c^2\ot\cdots\ot c^{m+n-1}))\\
 =&\varepsilon(\sum \sum_{i=1}^{m}(-1)^{(n-1)(i-1)}\eta(f(1\ot c_{1}^{1}\ot c_{1}^{2}\ot \cdots \ot c_{1}^{i-1}\ot c_{1}^{*}\ot  c_{1}^{i+n}\ot \cdots \ot c_{1}^{m+n-1}))\\
 &c_{2}^{1}c_{2}^{2}\cdots c_{2}^{i-1}c_{2}^{*} c_{2}^{i+n}\cdots c_{2}^{m+n-1}S(c_3^1c_3^2\cdots c_3^{m+n-1}))
 \end{align*}
 with
 \begin{align*}
 \Delta(c^*)&=\sum c_{1}^*\ot c_{2}^* \text{ and }\\
 c^*&=\sum\eta(g(1\ot c_{1}^i\ot c_{1}^{i+1}\ot \cdots \ot c_{1}^{i+n-1}))c_{2}^{i}c_2^{i+1}\cdots c_{2}^{i+n-1}.
 \end{align*} 
 
 Therefore, the last formula is a general expression of the Gerstenhaber bracket on a Hopf algebra cohomology which is indeed inherited from the formula of the bracket on Hochschild cohomology.
 
\section*{Acknowledgement}
The author would like to thank S. Witherspoon for her precious time, suggestions and support.

\Addresses
\end{document}